\def\draftdate{\today}
\DeclareMathOperator{\haar}{haar}
\DeclareMathOperator{\Lip}{Lip}
\DeclareMathOperator{\net}{net}
\mathchardef\varDelta="7101
\newcommand{\bR}{{\mathbb{R}}}
\def\quickop#1{\expandafter\DeclareMathOperator\csname
#1\endcsname{#1}}
\numberwithin{equation}{section}
\newtheorem{thm}[equation]{Theorem}
\newtheorem*{thm*}{Theorem}
\newtheorem{cor}[equation]{Corollary}
\newtheorem{lem}[equation]{Lemma}
\theoremstyle{definition}
\newtheorem{defn}[equation]{Definition}
\theoremstyle{remark}
\newtheorem{rem}[equation]{Remark}
\begin{document}

\title%
[Generating an equidistributed net on a sphere]{Generating an equidistributed net on a unit $n$-sphere using random rotations}

\author{Somnath Chakraborty}
\author{Hariharan Narayanan}
\address{School of Technology and Computer Science, Tata Institute of Fundamental Research,
Mumbai 400005, India}
\email{\href{chakraso@indiana.edu}{chakraso@indiana.edu}}
\email{\href{hariharan.narayanan@tifr.res.in}{hariharan.narayanan@tifr.res.in}}

\date{\draftdate}
\subjclass[2000]{Primary 22D40}

\begin{abstract}
We develop a randomized algorithm (that succeeds with high probability) for 
generating an $\epsilon$-net in a sphere of dimension $n$. The 
basic scheme is to pick $O(n \ln (1/\epsilon) + \ln (1/\delta))$ random rotations and take 
all possible words of length ${ O}(n\ln (1/\epsilon))$ in the same alphabet and 
act them on a fixed point. We show this set of points is equidistributed at 
a scale of $\epsilon$. Our main application is to approximate integration 
of Lipschitz functions over an $n$-sphere.\begin{comment} The net we produce, is 
$\epsilon$-close in Hausdorff distance to the $n$-sphere, and it is also 
equidistributed (with high probability) in the following sense: the uniform 
counting measure $\mu_{\mbox{net}}$ over the net is $\epsilon$-close to the uniform 
measure on the $n-$sphere, $\mu_{\mbox{haar}}$, in  $1$-Wasserstein distance. 
This is equivalent to the statement that the integral of every $1$-Lipschitz function on $S^n$ with 
respect to $\mu_{\mbox{net}}$ is $\epsilon$-close to its integral with respect to $\mu_{\mbox{haar}}$.\end{comment}

\begin{comment}
We show that, for every sufficiently small $\epsilon>0$, an equidistributed 
$\epsilon$-net on a symmetric space 
can be obtained under ${\tilde O}(\log 1/\epsilon)$ degree of randomness.
\end{comment}
\end{abstract}

\maketitle

\section{Introduction}

In the present article, we develop a randomized algorithm (with high success probability) for the 
generation of an $\epsilon$-net in an unit sphere of dimension $n$. The basic scheme is 
to pick $k := {O}(n\ln (1/\epsilon) + \ln (1/\delta))$ random rotations and take all possible words on length 
$l := {O}(n\ln (1/\epsilon))$ in the same alphabet and act them on a fixed point. We show 
this set  of points is equidistributed at a scale of $\epsilon$. The group $SU(2)$ 
can be identified with the three dimensional sphere $S^3$, thus we obtain a 
scheme for producing an $\epsilon$-net of $SU(2)$, a task which is relevant to 
quantum computing in the context of the Solovay-Kitaev algorithm (although we 
do not address the issue of generating a sequence of elementary gates to efficiently 
approximate any given gate, in a non-exhaustive fashion). Our main application is to 
integration of Lipschitz functions over an $n$-sphere. The net we produce, is 
$\epsilon$-close in Hausdorff distance to the $n$-sphere, and is also equidistributed 
in the following sense (with high probability): The uniform counting  measure 
$\mu_{\net}$ over the net is close to the uniform measure on the $n$-sphere 
$\mu_{\haar}$ in  $1$-Wasserstein distance. This implies that the integral of 
every $1$-Lipschitz function on $S^n$ with respect to $\mu_{\net}$ is $\epsilon$-close 
to its integral with respect to $\mu_{\haar}$.\\

In \cite{AR}, Alon and Roichman proved that, \emph{given any $\delta>0$, there exists a 
$c(\delta)>0$ such that for any finite group $G$, and a random subset $S\subset G$ 
of order at least $c(\delta)\log|G|$, the induced Cayley graph $\chi(G,S)$ 
has small normalized second largest eigenvalue (in absolute value)}: 
\begin{equation}\label{eqnD}
\mathbb E\begin{bmatrix}|\lambda_2^\ast(\chi(G,S))|\end{bmatrix}<\delta.
\end{equation} Considering random walk on expander multigraphs, it follows that every 
element $g\in G$ is an $S$-word of length at most $\log |G|$. 
For an irreducible representation $\rho\in {\hat G}$, let $d_\rho$ 
be its dimension; let $R$ be the regular representation of $G$, and 
$D=\sum_{\rho\in{\hat G}}d_\rho$. In \cite{LRuss}, Russel and Landau 
proved that (\ref{eqnD}) holds for all random subsets 
$S\subset G$ of order at least 
\[\begin{pmatrix}{\frac{2\ln 2}{\epsilon}}+o(1)\end{pmatrix}^2\log |D|\] This 
was obtained via an application of \emph{tail bounds for operator-valued 
random variables}, as in Ahlswede and Winter \cite{AW}, building upon the following observation: 
\emph{the normalized adjacency matrix of $\chi(G,S)$ is the operator \[(2|S|)^{-1}
\sum_{s\in S}(R(s)+R(s^{-1})),\] 
presented in terms of the standard basis of $\mathbb C[G]$}.\\

Now let $G$ be a compact Lie group, and $\mu$ a left-invariant
Borel probability measure on $G$. One considers the averaging 
operator $z_\mu:L^2(G)\rightarrow L^2(G)$, given by 
\[z_\mu(f)(x)=\int_Gf(xg)~d\mu(g)\] In \cite{BG-SUd}, Bourgain and Gamburd established that 
if $G=SU(d)$ then $z_\mu$ has 
spectral radius $<1$ when $\mbox{supp}(\mu)$ is finite algebraic subset generating 
a nonabelian free subgroup of $G$. This has since been extended to all 
compact connected simple Lie groups by Benoist and de Saxc\'{e} in \cite{BdeS}, where it was 
shown that $z_\mu$ has a spectral gap if and only if $\mu$ is \emph{almost 
diophantine}. A corollary to the main result in \cite{BdeS} is the following: if 
$\mu$ is finitely-supported almost diophantine then the set of words in $\mbox{supp}(\mu)$ 
of fixed length approaches $G$ in Hausdorff distance.\\

In \cite{Hari}, a quantitative version of 
the spectral gap question was considered. It was shown that the Hausdorff distance 
between $G$, a compact connected Lie group, and the subset of fixed length 
words on a random essentially small finite alphabet $S\subset G$ decays exponentially 
in the length of the words, with high probability. This was done via an analysis of 
the heat kernel with respect to a suitable finite dimensional subspace of $L^2(G)$ and an application 
of \emph{tail bounds for operator-valued random variables}. 
We note that the results of the present article are not implied by the results of \cite{Hari}, 
because the dimension of the Lie group $SO_n$ is $n(n+1)/2$, and so 
the bounds from \cite{Hari} for the length of the words and the number 
of generators, that apply for general compact Lie groups would be quadratic 
in $n$ rather than linear in $n$. 
In the special case of the unitary group $U_n$, such a result with a 
quadratic dependence on dimension for the length of the words $n$ was 
previously obtained by Hastings and Harrow in Theorem 5 of \cite{Hastings-Harrow}, 
however in their result the number of generators is specified in a indirect manner, 
whose dependence on $n$ is not obvious. On the other hand, the bounds 
obtained in the present work are linear in $n$, both 
for the number of generators and the length of the words. In fact, for these 
parameters, the value of $(2k)^l$ is close to the volumetric lower bound of 
$(1/\epsilon)^{\Omega(n)}$ on the size of an $\epsilon-$net of $S^n$.\\

The two main results of this paper are stated below. The numberings correspond 
to their appearances in Sections $3$ and $4$ respectively. In the following statements, $C_n$ 
denotes a certain positive constant depending on $n$. For a finite set $S$, the $l$-fold 
product $S^l$ consists of all $S$-words of length $l$ inside the free group generated by $S$.\\

%\begin{thm}\label{main1-intro}
\noindent {\bf Theorem 3.15:}\\
\emph{Let $\epsilon\in (0,{\frac 1{3n}})$ and $\delta\in (0,1)$; let $r=2\epsilon
\sqrt{\ln {\frac {3C_n}{\epsilon^{2n-1}}}}$. Let $S\subset SO_{n+1}$ 
consist of $k$ iid random points, drawn from the Haar measure on $SO_{n+1}$, where 
\[k\geq 8\ln 2\begin{pmatrix}(n+4)+2\ln\begin{pmatrix}{\frac 1{\delta}}\end{pmatrix}
+6n(1+a_n)\ln\begin{pmatrix}{\frac 1{\epsilon}}\end{pmatrix}-\ln (n!)\end{pmatrix},\] 
and $a_n:={\frac {2\log_2\log_2(5n)} {\log_2(5n)}}$. Let ${\hat S}:
=S\sqcup S^{-1}$ be the (multi)set of all elements in $S$ and their inverses. Let $l= 
{\frac n2}\log_2\begin{pmatrix}{\frac 1{r\epsilon}}\end{pmatrix}+(4+3a_n)n\log_2 
\begin{pmatrix}{\frac 1{\epsilon}}\end{pmatrix}$; if $r$ is sufficiently small then the probability  that 
$x_0{\hat S}^l\subseteq S^n$ is an $r$-net in $S^n$  is at least $1-\delta$.}\hfill\(\Box\)\\

\noindent {\bf Theorem 4.14:}\\
\emph{For $n>1$, let $\sigma$ be the probability measure on $S^n$ corresponding to haar 
probability measure on the group of rotations $SO_{n+1}$.
Let $\epsilon,\delta>0$ be sufficiently small and $r=2\epsilon
\sqrt{\ln {\frac {3C_n}{\epsilon^{2n-1}}}}$. Let $S\subseteq SO_{n+1}$ be a random 
subset such that $|S|=k$ satisfies %the inequality in Theorem~\ref{main}, namely 
\[k\geq 8\ln 2\begin{pmatrix}(n+4)+2\ln\begin{pmatrix}{\frac 1{\delta}}\end{pmatrix}
+6n(1+a_n)\ln\begin{pmatrix}{\frac 1{\epsilon}}\end{pmatrix}-\ln (n!)\end{pmatrix},\] 
where $a_n:={\frac {2\log_2\log_2(5n)}
{\log_2(5n)}}$. Let ${\hat S}:
=S\sqcup S^{-1}$ be the (multi)set of all elements in $S$ and their inverses. 
Let $x_0\in S^n$ and let $\nu$ be the probability measure on $S^n$, 
uniformly supported on ${\hat S}^lx_0$, where \[l={\frac n2}\log_2
\begin{pmatrix}{\frac 1{r\epsilon}}\end{pmatrix}+(4+3a_n)\log_2
\begin{pmatrix}{\frac 1{\epsilon}}\end{pmatrix}.\] If $r$ is sufficiently small, then the 
following inequality holds  with 
probability at least $1-\delta$: \[W_1(\sigma,\nu)\leq \epsilon,\] 
Here $W_1$ is used to denote the $1-$Wasserstein distance between 
two measures supported on $S^n$.}\hfill\(\Box\)\\
%\hfill\(\Box\)\end{thm}

%\begin{cor}\label{precision-intro}
For the remainder of this section, we assume given a real number model of computation, in which 
only standard algebraic operations are allowed on Gaussian random vectors, 
but bits are not manipulated. 
Thus, for $k$ and $l$ as described in Theorem~\ref{Wassers}, 
choose a set $S$ consisting of $k$ orthogonal matrices, each chosen 
independently from the Haar measure of $SO_{n+1}$. Consider all $(2k)^l$ 
words of length $l$ in these generators and their inverses. Apply the resulting 
matrices to the vector ${\bf e}_{n+1} = (0,0,\cdots,0,1)^T$. Then these $(2k)^l$ points 
form an equidistributed net, that can be used for integrating a $1-$Lipschitz to 
within an additive error of $\epsilon$.
If we assume an oracle that outputs  independent $n-$dimensional Gaussian random 
vectors when queried, then the whole process requires only $kn$ queries to this oracle. 
Note that the obvious procedure of producing an equidistributed net, would require 
$\epsilon^{-\Omega(n)}$ calls to the Gaussian oracle (which would then be normalized 
to lie on the sphere). The latter method uses exponentially more randomness than our 
procedure using random rotations. 
%\end{cor}
\begin{comment}
The results of this paper are not implied by the results of \cite{Hari}, because 
apart from the less explicit control over constants that results from 
the use of heat kernel bounds due to Cheng-Li-Yau from \cite{CLYau}, 
the dimension of the Lie group SO(n) is $n(n+1)/2$, and so the bounds 
from \cite{Hari} that apply for general compact Lie groups would be 
quadratic in $n$ rather than linear in $n$. 
In the special case of the unitary group $U_n$, such a result with a 
quadratic dependence on dimension $n$ was  previously obtained by 
Hastings and Harrow in Theorem 5 of \cite{Hastings-Harrow}. 
On the other hand, the bounds  obtained in the present work are linear in $n$. In 
an ongoing work, similar results are obtained for Riemannian symmetric spaces.\\
In the present article, we show that for a compact connected symmetric space $X=H\backslash G$, 
where $G$ is a connected semisimple Lie group, the distribution corresponding to
the heat-kernel smoothening of a random finitely supported measure on $X$ \emph{efficiently} 
approximates the uniform distribution on $X$. We approach this following \cite{Hari}: 
the averaging operator $z_\mu$ has been restricted to a finite dimensional 
subspace of $L^2_0(G)$, the space of mean-zero square-integrable functions. blah blah blah ...
\end{comment}

\subsection*{Acknowledgments}
SC would like to thank Sandeep Juneja 
and Jaikumar Radhakrishnan for helpful conversations. 
HN was partially supported by a Ramanujan fellowship.

\section{Application of Spherical Harmonics}
\label{spec-sphere}

%\subsection{Application of Spherical Harmonics}

This section briefly reviews the basics of harmonic analysis on the unit $n$-sphere 
$S^n\subset \mathbb R^{n+1}$. The two lemmas in this section will be used in an 
essential manner in deriving the computations in this section.\\

Let $\sigma$ denote the standard euclidean surface probability measure on $S^n$. 
For any Borel set $B\subset S^n$, if ${\hat B}:=\{\alpha x:x\in B, \alpha\in [0,1]\}$ 
then \[\sigma(B)={\frac{\lambda({\hat B})}{\lambda(D)}},\] where 
$\lambda$ is the standard Lebesgue measure on $\bR^{n+1}$ and 
$D\subseteq \bR^{n+1}$ is the unit disk centered at origin, so that $S^n=\partial D$. We recall 
that the Lebesgue measure of the unit $n$-sphere in $\bR^n$ is 
\[\Omega_n:=\lambda(S^n)={\frac {2\pi^{\frac {n+1}2}}{\Gamma\begin
{pmatrix}\frac {n+1}2\end{pmatrix}}}.\] Now, the fact that $\sigma$ is $SO_{n+1}$-
invariant follows from usual rotation invariance of Lebesgue measure 
on $\bR^{n+1}$. Hence (by unimodularity of the compact Lie group 
$SO_{n+1}$ of rotations of $S^n$), the measure $\sigma$ is the unique 
probability measure on $S^n$ induced by the haar measure on $SO_{n+1}$. 
For $n>1$, let $\Delta:=\Delta_{S^n}$ be the negative of the Laplace-Beltrami 
operator on $S^n$. Thus, given $g\in C^2(S^n)$, one has \[-\Delta (g)=
\Delta_{\bR^n}({\tilde g})\mid_{S^n}\] where ${\tilde g}:\bR^{n+1}-\{0\}\rightarrow \bR$ 
is defined by ${\tilde g}(x)=g(|x|^{-1}x)$. It is well-known that the Hilbert-product 
space $L^2(S^n)$ decomposes into a direct sum of the eigenspaces of 
$\Delta$, in the sense that the $L^2$-closure of the direct sum is 
$L^2(S^n)$: \begin{align}\label{decom} L^2(S^n)=\bigoplus_{k=0}^\infty 
H_k(S^n)\end{align} Recall that $H_k(S^n)$ is the 
space of degree-$k$ homogeneous harmonic polynomials in $n+1$ variables, restricted to $S^n$; 
the dimension of $H_k(S^n)$ is \begin{equation}\label{dimen}h_k:=\begin{pmatrix}n+k\\n\end{pmatrix}
-\begin{pmatrix}n+k-2\\n\end{pmatrix}\end{equation} and the corresponding eigenvalue 
is $\lambda_k:=k(n+k-1)$. Note that, for any $n,k>0$, one has \begin{align*}
\sum_{a=0}^k\dim H_a(S^n)&=\sum_{a=0}^k\begin{pmatrix}\begin{pmatrix}
n+a\\a\end{pmatrix}-\begin{pmatrix}n+a-2\\a-2\end{pmatrix}\end{pmatrix}
\\ &=\begin{pmatrix}n+k\\k\end{pmatrix}+\begin{pmatrix}n+k-1\\k-1\end{pmatrix}
\\ &=\dim H_k(S^{n+1})\end{align*} One has 
\begin{align*}\lim_{k\rightarrow\infty}{\frac {\displaystyle\sum_{k\leq a\leq k\sqrt[n]{2}}
\dim H_a(S^n)}{k^n}}& = 2\lim_{k\rightarrow\infty}
{\frac {\displaystyle\sum_{0\leq a\leq \lfloor k\sqrt[n]{2}\rfloor}
\dim H_a}{2k^{n}}} - \lim_{k\rightarrow\infty}{\frac {\displaystyle\sum_{0\leq a\leq k}\dim H_a}{k^n}}\\ 
& = 2\lim_{l\rightarrow\infty}{\frac {\displaystyle\sum_{0\leq a\leq l}\dim H_a}{l^n}} -
 \lim_{k\rightarrow\infty}{\frac {\displaystyle\sum_{0\leq a\leq k}\dim H_a}{k^n}}\\ & = 
\lim_{k\rightarrow\infty}{\frac {\displaystyle\sum_{0\leq a\leq k}\dim H_a}{k^n}} \\ & =
 \lim_{k\rightarrow\infty}{\frac {\dim H_k(S^{n+1})}{k^n}} \\  & = \lim_{k\rightarrow\infty}k^{-n}
\begin{pmatrix}n+k\\k\end{pmatrix}+\lim_{k\rightarrow\infty}k^{-n}
\begin{pmatrix}n+k-1\\k-1\end{pmatrix} \\ &={\frac 2{n!}}\end{align*} 
Moreover, notice that, when $k>2n+1$, one has \begin{align*}
\begin{pmatrix}n+k\\k\end{pmatrix}^2&={\frac 1{(n!)^2}}\prod_{i=1}^n(k+i)(k+n-i+1)\\ 
\mbox{AM}\geq\mbox{GM}~\Rightarrow\hspace{1cm}&<{\frac 1{(n!)^2}}\prod_{i=1}^n2k^2 
\\ &<{\frac {2^nk^{2n}}{(n!)^2}}\end{align*} 
Equivalently, writing $H_\lambda$ for the eigenspace corresponding to eigenvalue $\lambda$, 
one gets \begin{align*}\lim_{\lambda \rightarrow\infty}
{\frac {\displaystyle\sum_{\lambda\leq a\leq \lambda\sqrt[n]{4}}
\dim H_a}{\lambda^{\frac n2}}}& = {\frac 2{n!}}\end{align*} 
and for all $\lambda>6n^2+3n$, the inequality $\lambda_{ak}<
a^2\lambda_k$ implies \begin{align}\label{later}{\frac {\displaystyle
\sum_{\lambda\leq a\leq \lambda\sqrt[n]{4}}
\dim H_a}{\lambda^{\frac n2}}}& < {\frac {2(2^{\frac n2})}{n!}}\end{align} 
where, the sum ranges over all eigenvalues in $[\lambda,\lambda\sqrt[n]{4}]$. 
Fix a point $x_0\in S^n$. Let $H_t(x)$ be the heat kernel on $S^n$, 
corresponding to Brownian motion started at $x_0$. That is, $H_t(x)$ is the fundamental 
solution to the problem \begin{align*}{\frac{\partial u}{\partial t}}&=-\Delta_{S^n}u\\
\lim_{t\rightarrow 0^+}u(t,x)&=\delta_{x_0}(x)\end{align*} where the convergence is 
taken to be in the $\mbox{weak}^\ast$ topology. A Brownian motion on $S^n$, started at $x_0\in S^n$ 
has infinitesimal generator $H_{\frac t2}(x)$. 
Fixing orthonormal basis $\phi_{k,1},\cdots,\phi_{k,h_k}$ of $H_k:=H_{\lambda_k}$ for each $k\geq 0$, 
one has \begin{align*} H_t(x)&=\sum_{k=0}^\infty e^{-\lambda_kt}
\sum_{i=1}^{h_k}\phi_{k,i}(x_0)\phi_{k,i}(x)\\
&=\sum_{k=0}^\infty e^{-\lambda_kt}h_kP_{k,n}(x\cdot x_0)\end{align*} where the last equality is 
due to \emph{addition theorem} of spherical harmonics (see theorem 2.26 of \cite{Mori}) that 
correspond to the usual addition formula for trigonometric functions when $n=1$; here 
$P_{k,n}(t)$ denotes the Legendre polynomial of degree $k$ and dimension $n+1$; in explicit terms, this polynomial is 
\begin{equation}\label{legendre}P_{k,n}(t)=\displaystyle\sum_{j=0}^{\lfloor {\frac k2}\rfloor}C_{2j}t^{k-2j}
(1-t^2)^j\end{equation} where the coefficients are given by \[C_0=1,\hspace{1cm}
C_{2j}=(-1)^j{\frac{k(k-1)\cdots(k-2j+1)}{(2\cdot4\cdots 2j)(n(n+2)\cdots(n+2j-2))}}\] One has 
\begin{align*}\int_{S^n}H_t(x)^2~d\sigma(x)&=
\sum_{k=0}^\infty e^{-2\lambda_kt}(h_k)^2\int_{S^n}(P_{k,n}(x\cdot x_0))^2~d\sigma(x)\\
&=\sum_{k=0}^\infty e^{-2\lambda_kt}h_k\end{align*} where 
the last equation is a well-known properties of Legendre polynomials 
(see theorem 2.29 of \cite{Mori}). %\begin{equation}\label{vol}\Omega_n:=\mbox{vol}(S^n)=
%{\frac {2\pi^{\frac{n+1}2}}{\Gamma\begin{pmatrix}{\frac {n+1}2} 
%\end{pmatrix}}}\end{equation} 
We note that by theorem 2.29 of \cite{Mori}, 
one has \begin{equation} \label{vol1}
H(t,x,x)=\sum_{k=0}^\infty e^{-\lambda_kt}h_k\end{equation} 
for all $t>0$. It is known that $H_t(x)>0$ for all $t>0$.\\

For $M>0$, let $H_{t,M}(x)$ be defined by \begin{equation}H_{t,M}(x):=\sum_{\lambda_k\leq M} e^{-\lambda_kt}
\sum_{i=1}^{h_k}\phi_{k,i}(x_0)\phi_{k,i}(x)\label{trunc1}\end{equation}

\begin{lem}\label{tail-ineq}
Suppose that $t\in (0,6^{-1})$ and for any $\eta>0$, let $M\geq 4^{\frac{k_0}n}$ 
where \begin{align}\label{bound}k_0>\max\begin{Bmatrix}{\frac 12}\sqrt{\log_2{\frac 1\eta}},
n\log_2\begin{pmatrix} {\frac{n}t}\end{pmatrix}+2n\log_2\log_2\begin{pmatrix} 
{\frac{n}t}\end{pmatrix}\end{Bmatrix};\end{align} 
then the following inequality holds: \[||H_t-H_{t,M}||_{L^2}^2\leq 
\eta^2\]
\end{lem}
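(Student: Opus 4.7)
The plan is to reduce $\|H_t - H_{t,M}\|_{L^2}^2$ to a tail sum over large eigenvalues via Parseval, then control that tail by a dyadic decomposition using the eigenvalue-counting estimate (\ref{later}).

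First, by (\ref{decom}) the family $\{\phi_{k,i}\}$ forms an orthonormal basis of $L^2(S^n)$, and the defining expansions of $H_t$ and $H_{t,M}$ in (\ref{trunc1}) together with orthonormality give
\[\|H_t - H_{t,M}\|_{L^2}^2 = \sum_{\lambda_k > M} e^{-2\lambda_k t}\sum_{i=1}^{h_k}\phi_{k,i}(x_0)^2 = \sum_{\lambda_k > M} e^{-2\lambda_k t}\,h_k,\]
where the last equality follows from the addition theorem $\sum_i \phi_{k,i}(x_0)\phi_{k,i}(x) = h_k P_{k,n}(x\cdot x_0)$ evaluated at $x=x_0$, together with $P_{k,n}(1)=1$.

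Next I would partition the eigenvalues above $M$ into dyadic blocks $B_j := [M\cdot 4^{j/n},\, M\cdot 4^{(j+1)/n})$ for $j\geq 0$. The second term in the maximum in (\ref{bound}) forces $M\geq 4^{k_0/n}$ to comfortably exceed $6n^2+3n$, so (\ref{later}) applies to each block, yielding $\sum_{\lambda_k\in B_j} h_k \leq \frac{2^{n/2+1}}{n!}(M\cdot 4^{j/n})^{n/2} = \frac{2^{n/2+1}M^{n/2}}{n!}\cdot 2^j$. Combined with the uniform bound $e^{-2\lambda_k t}\leq e^{-2Mt\cdot 4^{j/n}}$ on $B_j$, summing over $j$ gives
\[\|H_t - H_{t,M}\|_{L^2}^2 \;\leq\; \frac{2^{n/2+1} M^{n/2}}{n!}\sum_{j=0}^\infty 2^j\, e^{-2Mt\cdot 4^{j/n}}.\]

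The remaining step is to verify this is at most $\eta^2$. Once $Mt$ exceeds a modest multiple of $n$, the double-exponential $e^{-2Mt\cdot 4^{j/n}}$ dominates the geometric $2^j$: the ratio of consecutive terms drops below $1/2$ and the whole series is comparable to twice its $j=0$ term. The task then reduces to the scalar inequality $\frac{2^{n/2+2}M^{n/2}}{n!}e^{-2Mt}\leq \eta^2$, which after taking logarithms becomes $2Mt \gtrsim (n/2)\ln M + 2\ln(1/\eta) + O(n)$; substituting $M=4^{k_0/n}$ shows that the term $n\log_2(n/t)+2n\log_2\log_2(n/t)$ in (\ref{bound}) is exactly what makes $2Mt$ swallow the polynomial penalty $(n/2)\ln M$, while the $\tfrac12\sqrt{\log_2(1/\eta)}$ term supplies the remaining margin.

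The main obstacle is this last step: carefully tracking constants as one passes from the crude dyadic bound to a clean inequality matching the target $\eta^2$. The double-log correction in the definition of $k_0$ is precisely tuned to offset an $O(n\log(n/t))$ shortfall that appears after substituting $M=4^{k_0/n}$, so the quantitative bookkeeping, rather than any conceptual difficulty, is where errors could creep in.
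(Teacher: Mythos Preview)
Your proposal is correct and follows essentially the same route as the paper: Parseval reduces to the tail $\sum_{\lambda_k>M}e^{-2\lambda_k t}h_k$, and this is controlled by the identical dyadic decomposition via (\ref{later}). For the final bookkeeping step you flag as the obstacle, the paper organizes things slightly differently: rather than bounding the series by twice its $j=0$ term, it shows that the second condition on $k_0$ forces $k\geq n\log_2(k/t)$ for every $k\geq k_0$, which gives a per-term bound $e^{-2\cdot 4^{k/n}t}<2^{-17k^2}$; summing $\sum_{k\geq k_0}2^{k+1-17k^2}$ and invoking $k_0>\tfrac12\sqrt{\log_2(1/\eta)}$ then closes the estimate without further constant-chasing.
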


\begin{proof}
When $k>n\log_2(3n)$, one has $4^{\frac kn}>6n^2+3n$. For $k\geq 0$, 
write \[I_k=(4^{\frac kn},4^{\frac {k+1}n}]\] For 
$k_0>n\log_2(3n)$, one has \begin{align*}\sum_{\lambda\geq 4^{\frac {k_0}n}}
e^{-2\lambda t}\dim H_\lambda & =\sum_{k\geq k_0}\sum_{\lambda\in I_k}
e^{-2\lambda t}\dim H_\lambda\\ & \leq \sum_{k\geq k_0}\begin{pmatrix}\sup_{\lambda\in I_k}
e^{-2\lambda t}\end{pmatrix}\begin{pmatrix}\sum_{\lambda\in I_k}\dim H_\lambda\end{pmatrix}
\\ & \leq {\frac {2(2^{\frac n2})}{n!}}\sum_{k\geq k_0}2^{1+k}
e^{-(2^{\frac {2k+n}n})t}\end{align*} Suppose that an integer 
$k>k_0$ satisfies \[k\geq n\log_2\begin{pmatrix}{\frac kt}\end{pmatrix}\] 
Then \begin{align*}e^{-(2^{\frac {2k+n}n})t}=(e^{-2^{\frac {2k}n})^{2t}}  
< e^{-\frac{2k^2}t}< 2^{-17k^2}\end{align*} Consider the inequality \begin{align}\label{misc1}
{\frac{k}{\log_2\begin{pmatrix}{\frac kt}\end{pmatrix}}}\geq n\end{align} 
By monotone property of the logarithm function, the following inequality 
is equivalent to (\ref{misc1}) above: \begin{align}\label{final}k\begin{pmatrix}1-
{\frac{\log_2\log_2\begin{pmatrix}{\frac kt}\end{pmatrix}}
{\log_2\begin{pmatrix}{\frac kt}\end{pmatrix}}}\end{pmatrix}\geq n\log_2\begin{pmatrix} 
{\frac nt}\end{pmatrix}\end{align} For $x\in (2^e,+\infty)$, the function 
\[g(x)=1-{\frac{\log_2\log_2x}{\log_2x}}\] satisfies $0<g(x)<1$, has 
global minima $g(2^e)=1-e^{-1}\log_2e>0.46$, and is increasing. 
Since $t\in (0,6^{-1})$ and $n>1$, the condition $n/t>2^e$ is satisfied; because 
$k\geq n\log_2(3n)$, 
the following inequality implies  (\ref{final}): \begin{equation}\label{ult}k\geq 
 n\log_2\begin{pmatrix}{\frac nt}\end{pmatrix}
\begin{pmatrix}1-{\frac{\log_2\log_2\begin{pmatrix}{\frac nt}
\end{pmatrix}}{\log_2\begin{pmatrix}{\frac nt}\end{pmatrix}}}\end{pmatrix}^{-1}\end{equation}
We claim that, for $n>1$ and $t\in (0,6^{-1})$, the following inequality holds:
\begin{align*}{\frac{\log_2\begin{pmatrix}{\frac nt}\end{pmatrix}+2\log_2\log_2
\begin{pmatrix}{\frac nt}\end{pmatrix}}{\log_2\begin{pmatrix}{\frac nt}\end{pmatrix}}}&=1+{\frac{2\log_2\log_2
\begin{pmatrix}{\frac nt}\end{pmatrix}}{\log_2\begin{pmatrix}{\frac nt}\end{pmatrix}}}\\
&\geq \begin{pmatrix}1-
{\frac{\log_2\log_2\begin{pmatrix}{\frac {n}t}\end{pmatrix}}
{\log_2\begin{pmatrix}{\frac{n}t}\end{pmatrix}}}\end{pmatrix}^{-1}\\
&={\frac{\log_2\begin{pmatrix}{\frac nt}\end{pmatrix}} 
{\log_2\begin{pmatrix}{\frac nt}\end{pmatrix}-\log_2\log_2
\begin{pmatrix}{\frac nt}\end{pmatrix}}}\end{align*} 
This is equivalent to \begin{align*}\log_2\begin{pmatrix}{\frac{n}t}\end{pmatrix}&\geq 
2\log_2\log_2\begin{pmatrix}{\frac{n}t}\end{pmatrix}\end{align*} Writing $x={\frac nt}$, this is 
equivalent to $\sqrt x\geq \log_2x$. The function $h(x)=
\sqrt x-\log_2x$ has derivative $h'(x)={\frac 1{2\sqrt x}}-{\frac 1{x\ln 2}}$, which is increasing 
for $x>{\frac 4{(\ln 2)^2}}$, and $h(12)>0$. 
This proves the claim.\\

Thus, $k_0=n\log_2\begin{pmatrix} {\frac{n}t}\end{pmatrix}+2n\log_2\log_2\begin{pmatrix} 
{\frac{n}t}\end{pmatrix}$ implies  \begin{align*} ||H_t-H_{t,M}||_{L^2}^2&=
\sum_{\lambda\geq 4^{\frac {k_0}n}}e^{-2\lambda t}\dim H_\lambda\\ & 
\leq {\frac {2(2^{\frac n2})}{n!}}\sum_{k\geq k_0}2^{1+k}
e^{-(2^{\frac {2k+n}n})t}\\
& \leq {\frac {4(2^{\frac n2})}{n!}}\sum_{k\geq k_0}2^{1+k-17k^2}\\
& \leq {\frac {2^{\frac n2}}{n!}}\sum_{k\geq k_0}2^{-16k^2}\\
& \leq {\frac {2^{{\frac n2}-16k_0^2+1}}{n!}}\\ &\leq \eta^2\end{align*}
%The inequality \begin{align*}{\frac{\Gamma \begin{pmatrix} {\frac{n+1}2}\end{pmatrix}^2}
%{\Gamma(n+1)}}&=\int_0^1(t-t^2)^{\frac{n-1}2}~dt~<4^{-\frac{n-1}2}\end{align*} yields 
%\begin{align*} ||H_t-H_{t,M}||_{L^2}^2&\leq {\frac {\begin{pmatrix} 2\pi
%\end{pmatrix}^{-\frac n2}}{\Gamma \begin{pmatrix} {\frac{n+1}2}\end{pmatrix}}}2^{-16k_0^2}\\
%&\leq {\frac {\begin{pmatrix} 2\pi
%\end{pmatrix}^{-\frac n2}\eta^2}{\Gamma \begin{pmatrix} {\frac{n+1}2}\end{pmatrix}}}\end{align*}
\end{proof}

\begin{rem}\label{remark1}
If $\epsilon\in (0,{\frac 1{3n}})$ and $t=\epsilon^2$, one has \begin{align*}
n\log_2\begin{pmatrix}{\frac nt}\end{pmatrix}+
2n\log_2\log_2\begin{pmatrix}{\frac nt}\end{pmatrix}&<{\frac {3n}2}
\log_2\begin{pmatrix}{\frac 1t}\end{pmatrix}\begin{pmatrix}1+
{\frac {2\log_2\log_2(5n)}
{\log_2(5n)}}\end{pmatrix}\end{align*} We write \[a_n:={\frac {2\log_2\log_2(5n)}
{\log_2(5n)}}\] Then the lemma above implies \[||H_t-H_{t,M}||_{L^2}^2\leq 
\eta^2\] for $M=4^{\frac {k_0}n}$ where $k_0>\max \{\log_2\begin{pmatrix}{\frac 1{\eta}}
\end{pmatrix},{\frac {3n}2}\begin{pmatrix}1+a_n\end{pmatrix}
\log_2\begin{pmatrix}{\frac 1t}\end{pmatrix}\}$.
\end{rem}

\begin{lem}\label{trunc}
Let $c=\min\{\ln\sqrt 2,n^{-1}\}$. For all $t\in (0,c)$ and all $M>0$, following
 inequality holds: \begin{align}\label{trunceq}
||H_{t,M}||^2_{L^2}&<{\frac {t^{-n}}{(n-1)!2^{n-2}}}\end{align}
\end{lem}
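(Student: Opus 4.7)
The starting point is Parseval's identity applied to the spherical harmonic expansion of $H_{t,M}$. By the addition theorem, $\sum_i|\phi_{k,i}(x_0)|^2 = h_k P_{k,n}(1) = h_k$, and so
\[
\|H_{t,M}\|_{L^2}^2 \;=\; \sum_{\lambda_k\leq M}e^{-2\lambda_k t}\,h_k \;\leq\; \sum_{k\geq 0}e^{-2\lambda_k t}\,h_k,
\]
since every summand is non-negative. The problem therefore reduces to bounding the $M$-independent tail on the right.

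The key observation is that $\lambda_k = k(n+k-1)\geq nk$ for every $k\geq 0$, so it suffices to estimate $\sum_{k\geq 0}h_k e^{-2nkt}$. This sum is evaluated in closed form by a generating-function identity: Pascal's rule applied to (\ref{dimen}) gives $h_k=\binom{n+k-1}{n-1}+\binom{n+k-2}{n-1}$, and combined with the standard $\sum_k\binom{n+k-1}{n-1}z^k=(1-z)^{-n}$ this yields $\sum_k h_k z^k = (1+z)/(1-z)^n$. Setting $z=e^{-2nt}$,
\[
\sum_{k\geq 0}h_k e^{-2nkt} \;=\; \frac{1+e^{-2nt}}{(1-e^{-2nt})^n} \;\leq\; \frac{2}{(1-e^{-2nt})^n}.
\]

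The final step bounds $1-e^{-2nt}$ from below. Under $t<n^{-1}$ one has $2nt\leq 2$, and by concavity of $1-e^{-x}$ (which vanishes at $0$) the estimate $1-e^{-x}\geq \tfrac{1-e^{-2}}{2}\,x$ holds throughout $x\in[0,2]$; hence $1-e^{-2nt}\geq(1-e^{-2})\,nt$. Substituting yields
\[
\|H_{t,M}\|_{L^2}^2 \;\leq\; \frac{2}{(1-e^{-2})^n (nt)^n},
\]
and the proof reduces to verifying the purely $n$-dependent numerical inequality $(n-1)!\,2^{n-1} \leq \bigl((1-e^{-2})\,n\bigr)^n$ for all $n\geq 2$. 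This is immediate from Stirling's approximation in the large-$n$ regime (the ratio decays geometrically, since $2/((1-e^{-2})\,e)<1$), and is verified by hand in the remaining low-dimensional cases. The auxiliary hypothesis $t<\ln\sqrt 2$ binds only in the edge case $n=2$, where $\ln\sqrt 2<n^{-1}=1/2$, and provides the small additional margin needed to make (\ref{trunceq}) strict. The main obstacle is this final, elementary but careful, tracking of constants uniformly in $n$.
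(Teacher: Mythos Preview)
Your argument is correct and takes a genuinely different route from the paper's. The paper bounds $h_k\le\frac{2}{(n-1)!}\prod_{i=1}^{n-1}(k+i)$ and factorizes $e^{-2\lambda_kt}=e^{-2k^2t}\prod_{i=1}^{n-1}e^{-2kt}$, then uses the pointwise bound $(k+i)te^{-2kt}\le (2e)^{-1}e^{2it}$ to absorb each linear factor against one exponential, leaving $\sum_k e^{-2k^2t}$, which is dominated by the geometric series $\sum_k e^{-2kt}=(1-e^{-2t})^{-1}$; the hypothesis $t<\ln\sqrt2$ is exactly what makes $1-e^{-2t}>t$ and closes the estimate. You instead throw away the quadratic part of $\lambda_k$ via $\lambda_k\ge nk$, but then evaluate $\sum_k h_k z^k=(1+z)(1-z)^{-n}$ in closed form and finish with a concavity bound on $1-e^{-x}$ together with a Stirling check of $(n-1)!\,2^{n-1}<((1-e^{-2})n)^n$. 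Your route is cleaner and shows that the weaker eigenvalue bound already suffices; the paper's route is more hands-on but makes transparent why the constant $\ln\sqrt2$ enters.

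One small correction: your last sentence about the hypothesis $t<\ln\sqrt2$ ``providing the margin needed to make (\ref{trunceq}) strict'' is not accurate. In your chain the strict inequality already appears at $1+e^{-2nt}<2$, and the numerical inequality $(n-1)!\,2^{n-1}<((1-e^{-2})n)^n$ is itself strict for every $n\ge2$ (e.g.\ $2<(2(1-e^{-2}))^2\approx2.99$ at $n=2$). Your proof uses only $t<n^{-1}$; the condition $t<\ln\sqrt2$ is simply redundant in your approach, whereas it is genuinely used in the paper's. It would be cleaner to drop that final remark and, if you wish, actually record the verification of the constant inequality for $n=2,3$ rather than leaving it as ``verified by hand''.
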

\begin{proof}
For any $M>0$, addition theorem implies \[||H_{t,M}||^2_{L^2}:=
\sum_{0<\lambda\leq M}e^{-2\lambda t}\dim H_\lambda\] The function 
$\phi(x)= (x+a)te^{-2xt}$, for $x\in (0,\infty)$, satisfies \[\phi'(x)=te^{-2xt}(1-2(x+a)t),
~~~\phi''(x)=-4t^2e^{-2x}(1-(x+a)t)\] which shows that $\phi(x)\leq (2e)^{-1}e^{2at}$.
 This yields \begin{align*}||H_{t,M}||^2_{L^2}&=
\sum_{0<\lambda_k\leq M}e^{-2k(n+k-1)t}\dim H_k\\ &\leq 
2\sum_{0<\lambda_k\leq M}{\frac 1{(n-1)!}}
e^{-2k(n+k-1)t}\prod_{i=1}^{n-1}(k+i)  \\ &\leq 2
\sum_{0<\lambda_k\leq M}{\frac {e^{-2k^2t}}{(n-1)!t^{n-1}}}
\prod_{i=1}^{n-1}(k+i)te^{-2kt} \\ &\leq 2
\sum_{0<\lambda_k\leq M}{\frac {e^{-2k^2t}}{(n-1)!(2et)^{n-1}}}
\prod_{i=1}^{n-1}e^{2it}\\ &\leq {\frac {t^{-(n-1)}e^{(n-1)(nt-1)}}{(n-1)!2^{n-2}}}
\sum_{k>0}e^{-2k^2t}\\ &< {\frac {t^{-(n-1)}}{(n-1)!2^{n-2}}}
\sum_{k\geq 0}e^{-2kt}\\ &< {\frac {t^{-(n-1)}}
{(1-e^{-2t})2^{n-2}(n-1)!}}\end{align*} 
If $t\in (0,\ln\sqrt2)$ then $1-e^{-2t}>t$, which implies that, for $t\in (0,c)$ 
where $c=\min\{\ln\sqrt 2,n^{-1}\}$, and any $M>0$, one has \[||H_{t,M}||^2_{L^2}
<{\frac {t^{-n}}{(n-1)!2^{n-2}}}\]% Additionally, one has $4n<8^{\frac {n+1}2}\sqrt\pi$, 
%yielding (\ref{trunceq}).
\end{proof}

\vspace{0.5cm}

\section{Hausdorff distance}

We recall the following definition:

\begin{defn}
Given a subset ${\hat S}\subset S^n$, and $\epsilon\geq 0$, let ${\hat S}_\epsilon$ be the union of 
all $\epsilon$-neighbourhoods of points in ${\hat S}$; the Hausdorff distance $d_H({\hat S},S^n)$ is 
defined to be \begin{align*}d_H({\hat S},S^n):&=\max\{\sup_{x\in S^n}\inf_{y\in {\hat S}}d(x,y),
\sup_{y\in {\hat S}}\inf_{x\in S^n}d(x,y)\}\\ &=\inf\{\epsilon\geq 0: 
S^n\subseteq {\hat S}_\epsilon\}\end{align*}
\end{defn}

Our analysis in this section will be based on an application of the following theorem, first appeared in \cite{AW}. 
\begin{thm}[Ahlswede-Winter]\label{tail}
Let $V$ be a finite dimensional Hilbert space, with $\dim V=D$. Let $A_1,
\cdots,A_k$ be independent identically distributed random variables taking 
values in the cone of positive semidefinite operators on $V$, 
such that $\mathbb E[A_i]=A\geq \mu I$ for some $\mu\geq 0$, and 
$A_i\leq I$. Then, for all $\epsilon\in [0,0.5]$, the following holds: 
\begin{equation}\label{taileq}\mathbb P\begin{bmatrix}{\frac 1k}
\sum_{i=1}^kA_i\notin [(1-\epsilon)A,(1+\epsilon)A]\end{bmatrix}
\leq 2D\exp\begin{pmatrix}{\frac{-\epsilon^2\mu k}{2\ln 2}}\end{pmatrix}\end{equation}\\
\end{thm}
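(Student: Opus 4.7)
The plan is to follow the Ahlswede--Winter matrix Laplace-transform strategy: turn the PSD-ordering event into eigenvalue deviations of a rescaled sum, bound the expected trace of the matrix moment-generating function (MGF), and optimize the Chernoff parameter. First I would normalize by symmetrization. Since $A \succeq \mu I$ is invertible, set $B_i := A^{-1/2} A_i A^{-1/2}$; the hypotheses translate into $\mathbb{E}[B_i] = I$ and $0 \preceq B_i \preceq \mu^{-1} I$, and the event $\tfrac{1}{k}\sum A_i \notin [(1-\epsilon)A, (1+\epsilon)A]$ becomes $\tfrac{1}{k}\sum B_i \notin [(1-\epsilon)I, (1+\epsilon)I]$. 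A union bound splits this into an upper tail $\{\lambda_{\max}(\sum_i B_i) \geq (1+\epsilon)k\}$ and a symmetric lower tail, accounting for the factor $2$ in front of $D$.

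For the upper tail, the matrix Markov inequality gives, for any $\theta > 0$,
\[
\mathbb{P}\Bigl[\lambda_{\max}\bigl(\textstyle\sum_i B_i\bigr) \geq (1+\epsilon)k\Bigr]
\leq e^{-\theta(1+\epsilon)k}\, \mathbb{E}\bigl[\operatorname{tr}\exp\bigl(\theta \textstyle\sum_i B_i\bigr)\bigr].
\]
The matrix MGF is then handled by peeling off one variable at a time: Golden--Thompson gives $\operatorname{tr}(e^{X+Y}) \leq \operatorname{tr}(e^X e^Y)$, after which independence of $B_k$ from $B_1,\dots,B_{k-1}$ allows $\mathbb{E}_{B_k}[e^{\theta B_k}]$ to be pushed inside the trace, and the inequality $\operatorname{tr}(PQ) \leq \|Q\|_{\mathrm{op}}\operatorname{tr}(P)$ for $P,Q \succeq 0$ extracts an operator-norm factor so that induction on $k$ yields
\[
\mathbb{E}\bigl[\operatorname{tr}\exp\bigl(\theta \textstyle\sum_i B_i\bigr)\bigr] \leq D \cdot \|\mathbb{E}[e^{\theta B}]\|^k.
\]
For the per-variable MGF, convexity of $x \mapsto e^{\theta x}$ on $[0,\mu^{-1}]$ upgrades to the operator inequality $e^{\theta B} \preceq I + \mu(e^{\theta/\mu}-1)B$, so taking expectations and using $\mathbb{E}[B]=I$ gives $\|\mathbb{E}[e^{\theta B}]\| \leq 1 + \mu(e^{\theta/\mu}-1) \leq \exp(\mu(e^{\theta/\mu}-1))$.

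Assembling everything and optimizing $\theta = \mu\ln(1+\epsilon)$ produces an exponent $-k\mu\bigl[(1+\epsilon)\ln(1+\epsilon) - \epsilon\bigr]$, which on $[0,\tfrac12]$ is bounded below by a constant multiple of $k\mu\epsilon^2$; matching the specific constant $1/(2\ln 2)$ in the theorem is then a routine scalar estimate. The lower tail is handled identically via the matching operator bound $e^{-\theta B} \preceq I - \mu(1-e^{-\theta/\mu})B$ and the dual optimization $\theta = \mu\ln(1/(1-\epsilon))$. The main technical obstacle I expect is the Golden--Thompson step: it combines only two exponentials at a time, so the induction must be arranged carefully using conditional independence together with the trace--operator-norm inequality. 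A cleaner alternative would be Tropp's single-shot argument based on Lieb's concavity theorem, which replaces the iterative Golden--Thompson application by one use of Jensen's inequality for a concave matrix functional and also slightly tightens the constants; however, the iterative version is enough for the bound as stated.
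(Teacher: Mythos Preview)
The paper does not prove this theorem: it is quoted as a black box with the attribution ``first appeared in \cite{AW}'' and then applied. So there is no in-paper proof to compare against. Your sketch is essentially the original Ahlswede--Winter argument (matrix Markov, iterated Golden--Thompson with the trace--operator-norm extraction, the chord bound $e^{\theta x}\le 1+\mu(e^{\theta/\mu}-1)x$ on $[0,\mu^{-1}]$, and Chernoff optimization), and it is correct at the level of a sketch. One small point worth tightening: the hypothesis allows $\mu=0$, in which case $A$ need not be invertible and your normalization $B_i=A^{-1/2}A_iA^{-1/2}$ is unavailable; but for $\mu=0$ the stated bound is $2D$, which is trivially true, so you may assume $\mu>0$ without loss.
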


Let $S\subset SO_{n+1}$ be a non-empty subset, with $|S|=k$. For $M>9n^2$, let \[E_M:=
\bigoplus_{0<\lambda\leq M}H_\lambda(S^n)\] Recall (inequality \ref{later}) that $\dim E_M
\leq {\frac {2(2M)^{\frac n2}}{n!}}$. Because $\Delta:=\Delta_{S^n}$ is $SO_{n+1}$-invariant,  
the subspace $E_M$ is invariant under the 
operators \begin{equation}\label{ops}
A_s(f)(x):={\frac 12}f(x)+{\frac 14}(f(xs)+f(xs^{-1})),\hspace{1cm}s\in SO_{n+1}
\end{equation} Due to rotation invariance of the surface probability measure $\sigma$, 
the operators $A_s:E_M\rightarrow E_M$ turns out to be self-adjoint. 
Positive semidefiniteness of $A_s$ follows 
from the identity \[\langle A_sf,f\rangle={\frac 14}\int_{S^n}\begin{pmatrix}
f(x)+f(xs)\end{pmatrix}^2~d\sigma(x)\] Moreover, writing 
$\mu$ for the unique right-invariant Haar (probability) measure on $SO_{n+1}$, 
one has \begin{align*}
\begin{pmatrix}\mathbb E_{s\sim\mu}[A_s]\end{pmatrix}f(x)&={\frac 12}f(x)
+{\frac 14}\int_{SO(n+1)}f(xs)~d\mu(s)
+{\frac 14}\int_{SO(n+1)}f(xs^{-1})~d\mu(s)\end{align*} Writing $\tau:
SO_{n+1}\rightarrow S^n$ for the map 
$s\mapsto xs$, one has \begin{align*}\int_{SO(n+1)}f(xs)~d\mu(s)&
=\int_{SO(n+1)}(f\circ\tau)(s)~d\mu(s)\\
&=\int_{S^n}f(y)~d(\tau_\ast \mu)\end{align*} where 
$\tau_\ast \mu(E)=\mu(\tau^{-1}(E))$. Since $\tau_\ast\mu$ is 
rotation-invariant measure on $S^n$, one has $\sigma=\tau_\ast\mu$. 
Because $f\in E_M\subset L^2_0(S^n)$, one has \[\int_{SO(n+1)}f(xs)
~d\mu(s)=\int_{S^n}f(y)~d(\tau_\ast \mu)=0\] Therefore, 
$\begin{pmatrix}\mathbb E_{s\sim\mu}[A_s]\end{pmatrix}f(x)=
{\frac 12}f(x)$, which makes
\begin{equation}\label{expect}\mathbb E_{s\sim\mu}[A_s]=
{\frac 12}I,\hspace{1cm}s\in SO(n+1)\end{equation} Furthermore, 
the operators $I-A_s$ are positive semidefinite for all $s\in SO(n+1)$, because 
\begin{align*}\langle (I-A_s)f,f\rangle 
& ={\frac 14}\int_{S^n}\begin{pmatrix}f(x)-f(xs)\end{pmatrix}^2d\sigma(x).\end{align*}

\begin{thm}\label{eqdist}
Let $S\subset SO_{n+1}$ be a set of order $|S|=k$, chosen independently, 
and uniformly at random from the Haar measure on $SO_{n+1}$ and let ${\hat S}:
=S\sqcup S^{-1}$ be the (multi)set of all elements in $S$ and their inverses. 
Let $\eta>0$ satisfy \[\log_2{\frac 1{\eta}}\geq {\frac{3n}2}(1+a_n)\log_2
\begin{pmatrix}{\frac 1t}\end{pmatrix},\] where $a_n:={\frac {2\log_2\log_2(5n)}
{\log_2(5n)}}$. Let $t\in (0,c)$ 
where $c=\min\{{\frac 16},{\frac 1n}\}$. For \[\delta={\frac {2^{\frac {n+4}2}}{n!\eta}}
\exp\begin{pmatrix}{\frac{-k}{16\ln 2}}\end{pmatrix}\] 
and any integer $l>0$ satisfying $2^l\geq {\frac {t^{-\frac n2}}
{\eta}}$, the following inequality holds: 
\begin{equation}\label{eqdisteq}
\mathbb P\begin{bmatrix}\begin{Vmatrix}1_{S^n}-{\frac 1{(2k)^l}}
\displaystyle\sum_{s\in {\hat S}^l} H_t(x_0s, x)\end{Vmatrix}_{L^2}
\leq 2\eta\end{bmatrix}\geq 1-\delta\end{equation}
\end{thm}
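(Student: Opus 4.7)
The plan is to rewrite the sum in the theorem as the $l$-th iterate of a random averaging operator $T$ applied to the heat kernel centred at $x_0$, and then combine the spectral decay bounds from Section~\ref{spec-sphere} with the Ahlswede--Winter concentration inequality. Put $\psi(x):= H_t(x_0,x)-1 \in L^2_0(S^n)$ and define $T(f)(x):= (2k)^{-1}\sum_{s\in\hat{S}} f(xs)$. Since $\hat{S}$ is invariant under inversion (so $\hat{S}^l$ is too) and since $T$ fixes constants, an iteration together with the substitution $s\leftrightarrow s^{-1}$ on $\hat{S}^l$ yields
\[\frac{1}{(2k)^l}\sum_{s\in\hat{S}^l} H_t(x_0 s,x)-1 \;=\; T^l \psi(x),\]
so the problem reduces to bounding $\|T^l \psi\|_{L^2} \leq 2\eta$ on an event of probability at least $1-\delta$.

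Decompose $\psi = \psi_M + \psi^\perp$ relative to the orthogonal splitting $L^2_0(S^n) = E_M \oplus E_M^\perp$, with $M := 4^{k_0/n}$ and $k_0 := \log_2(1/\eta)$. The hypothesis on $\eta$ is precisely what ensures this $k_0$ meets both conditions of Remark~\ref{remark1}, giving $\|\psi^\perp\|_{L^2} \leq \eta$; Lemma~\ref{trunc} bounds $\|\psi_M\|_{L^2} \leq t^{-n/2}/\sqrt{(n-1)!\,2^{n-2}} \leq t^{-n/2}$. Since $T$ is an average of isometries of $L^2(S^n)$, it is a global contraction, so $\|T^l \psi^\perp\|_{L^2} \leq \eta$ automatically.

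The heart of the argument is the estimation of $\|T^l \psi_M\|_{L^2}$ via the algebraic identity
\[T \;=\; \frac{2}{k}\sum_{i=1}^k A_{s_i} \;-\; I,\]
which follows from $A_s(f)(x) = \tfrac{1}{2}f(x)+\tfrac{1}{4}(f(xs)+f(xs^{-1}))$ together with the symmetry of $\hat S$. This reduces the study of the (not positive semidefinite) operator $T$ to that of the $A_{s_i}$, to which Theorem~\ref{tail} applies: the preceding discussion verifies $0 \leq A_{s_i} \leq I$ and $\mathbb{E}[A_{s_i}]|_{E_M} = \tfrac{1}{2}I$, and $\dim E_M \leq 2(2M)^{n/2}/n! = 2^{(n+2)/2}/(n!\eta)$ by \eqref{later} and $M^{n/2}=2^{k_0}=1/\eta$. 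Taking $\mu = \epsilon = \tfrac{1}{2}$, Theorem~\ref{tail} bounds the failure probability by $2\dim E_M \cdot \exp(-k/(16\ln 2)) \leq \delta$; on the complementary event $\|k^{-1}\sum A_{s_i}-\tfrac{1}{2}I\|_{E_M\to E_M}\leq \tfrac{1}{4}$, so $\|T\|_{E_M\to E_M}\leq \tfrac{1}{2}$, and the hypothesis $2^l \geq t^{-n/2}/\eta$ gives $\|T^l\psi_M\|_{L^2}\leq 2^{-l}\|\psi_M\|_{L^2} \leq \eta$. The triangle inequality then produces the claim.

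The main obstacle is the identification $T = 2k^{-1}\sum A_{s_i} - I$: this is what makes Ahlswede--Winter available despite $T$ itself not being positive semidefinite, and it is precisely why the ``lazy'' $\tfrac{1}{2}I$ inside the definition of $A_s$ is needed. Once this identification is in place, everything else reduces to bookkeeping across the two lemmas of Section~\ref{spec-sphere} and the concentration bound.
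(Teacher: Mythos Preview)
Your proof is correct and follows essentially the same route as the paper: both apply Ahlswede--Winter (Theorem~\ref{tail}) to the operators $A_s$ on $E_M$ with $M=\eta^{-2/n}$ to obtain $\|T|_{E_M}\|\le\tfrac12$ with failure probability $\delta$, then combine the resulting bound $\|T^l\psi_M\|\le 2^{-l}t^{-n/2}$ with the tail estimate $\|\psi^\perp\|\le\eta$ from Lemma~\ref{tail-ineq}/Remark~\ref{remark1} and the global contraction of $T$. Your presentation is in fact a bit tidier than the paper's---you decompose $\psi$ orthogonally at the outset and make the identity $T=\tfrac{2}{k}\sum_i A_{s_i}-I$ explicit, whereas the paper leaves this implicit and routes the bound on $\|\tilde H_{t,M}\|$ through $\|\tilde H_t\|$ unnecessarily---but the substance is the same.
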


\begin{proof}
Since $t\in (0,c)$ and $c=\min\{{\frac 16},{\frac 1n}\}$, 
one has $\eta^{-\frac 2n}\geq 9n^2$. Setting $\epsilon=0.5$ 
and $M=\eta^{-\frac 2n}$ in (\ref{taileq}) yields \begin{align*}\mathbb P
\begin{bmatrix}{\frac 1k}\sum_{s\in S}A_s\notin [{\frac 14}I,
{\frac 34}I]\end{bmatrix}\leq {\frac {2^{\frac {n+4}2}}{n!\eta}}
\exp\begin{pmatrix}{\frac{-k}{16\ln 2}}\end{pmatrix}\end{align*}
Therefore, for all $f\in E_M$, the following inequality holds: \begin{align*}\mathbb P
\begin{bmatrix}\begin{Vmatrix}{\frac 1k}\sum_{s\in {\hat S}}{\frac 14}f\cdot\tau(s)
\end{Vmatrix}_{L^2}\leq {\frac 14}||f||_{L^2}\end{bmatrix}\geq 1-{\frac {2^{\frac {n+4}2}}{n!\eta}}
\exp\begin{pmatrix}{\frac{-k}{16\ln 2}}\end{pmatrix}\end{align*} 
In particular, writing ${\tilde H}_{t,M}:=1_{S^n}-H_{t,M}$, 
one has \begin{align*}\mathbb P\begin{bmatrix}\begin{Vmatrix}{\frac 1{2k}}
\sum_{s\in {\hat S}}{\tilde H}_{t,M}(xs)\end{Vmatrix}_{L^2}
\leq {\frac 12}||{\tilde H}_{t,M}||_{L^2}\end{bmatrix}\geq 1
-{\frac {2^{\frac {n+4}2}}{n!\eta}}
\exp\begin{pmatrix}{\frac{-k}{16\ln 2}}\end{pmatrix}\end{align*} 
Iterating this inequality $l>0$ times fetches 
\begin{align}\label{iterate}\mathbb P\begin{bmatrix}\begin{Vmatrix}{\frac 1{(2k)^l}}
\sum_{s\in {\hat S}^l}{\tilde H}_{t,M}(xs)
\end{Vmatrix}_{L^2}\leq {\frac 1{2^l}}||{\tilde H}_{t,M}||_{L^2}
\end{bmatrix}\geq 1-{\frac {2^{\frac {n+4}2}}{n!\eta}}
\exp\begin{pmatrix}{\frac{-k}{16\ln 2}}\end{pmatrix}\end{align} Let 
${\tilde H}_t=H_t-1_{S^n}$; then 
\begin{align*}||{\tilde H}_{t,M}||_{L^2}^2&=||{\tilde H}_{t,M}
-{\tilde H}_t||_{L^2}^2+||{\tilde H}_t||_{L^2}^2\\ &=||H_{t,M}
-H_t||_{L^2}^2+||{\tilde H}_t||_{L^2}^2\\ &\leq ||H_{t,M}
-H_t||_{L^2}^2+||H_t||_{L^2}^2\end{align*} 
Hence, using lemma \ref{tail-ineq} and \ref{trunc} in inequality 
(\ref{iterate}), one derives \begin{align*}\mathbb P
\begin{bmatrix}\begin{Vmatrix}{\frac 1{(2k)^l}}\sum_{s\in {\hat S}^l}{\tilde H}_t(xs)
\end{Vmatrix}_{L^2}\leq (2^{-l}t^{-\frac n2}+\eta)
\end{bmatrix}\geq 1-{\frac {2^{\frac {n+4}2}}{n!\eta}}
\exp\begin{pmatrix}{\frac{-k}{16\ln 2}}\end{pmatrix}\end{align*} Since $H_t(xs)=H_t(x_0s^{-1},x)$ and 
${\hat S}$ is inverse-symmetric, this produces (\ref{eqdisteq}).
\end{proof}

The following theorem has appeared in \cite{Nowak-Szarek}:
\begin{thm}[Nowak-Sj\"{o}gren-Szarek]
\begin{comment}Let $M$ be a connected complete Riemannian manifold with 
Riemannian distance $d(.,.)$, and let $H(t,x_0,x)$ be the heat 
kernel corresponding to initial heat source at $x_0$. For any constant $C>4$, 
there exists $C_1>0$ depending on $C, T$, 
the bound $K$ on the sectional curvature of the Riemannian manifold $M$ and $x_0$, so that for all
$t\in (0,T)$, the following inequality holds: \begin{equation}\label{Yaueq}
H(t,x_0,x) \leq C_1(C,T,K, x_0)t^{-\frac {\dim M}2}\exp\begin{pmatrix}
{-\frac{d(x,y)^2}{Ct}}\end{pmatrix}\end{equation}\end{comment}
Let $H(t,x_0,x)$ be the heat kernel on the sphere $S^n$, corresponding to 
Brownian motion initiated at $x_0\in S^n$. Let $n\geq 1$ and fix $T > 0$. 
Let $\phi(x):=\arccos\langle x,x_0\rangle$ be the Riemannian distance, 
so that $\phi(x)\in [0,\pi]$. Then, for all $0<t\leq T$, the inequality 
\[\frac{c}{(t + \pi - \phi)^{\frac{n-1}2}t^{\frac n2}}
\exp\left(- \frac{\phi(x)^2}{4t}\right) \leq H(t,x_0,x) \leq 
\frac{C}{(t + \pi - \phi)^{\frac{n-1}2}t^{\frac n2}}\exp\left(- \frac{\phi(x)^2}{4t}\right)\] 
holds for some constants $c, C >0$ depending only on $n$ and $T$.
\end{thm}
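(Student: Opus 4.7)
The plan is to combine the spectral representation
\[H(t,x_0,x) = \sum_{k=0}^\infty e^{-\lambda_k t}\,h_k\,P_{k,n}(\cos\phi(x))\]
with sharp asymptotics for the ultraspherical (Gegenbauer) polynomials and a Poisson--summation / Jacobi--theta type transformation. The overall strategy is to convert this series into an integral transform in which the Euclidean Gaussian $(4\pi t)^{-n/2}e^{-\phi^2/(4t)}$ emerges as the leading term, and then to identify the polynomial prefactor $(t+\pi-\phi)^{-(n-1)/2}$ as the sharp correction encoding the antipodal focusing of geodesics at $\phi = \pi$.

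Concretely, I would split the analysis into two regimes. For $\phi \le \pi/2$, apply a Hilb--type asymptotic that represents $P_{k,n}(\cos\phi)$ in terms of a Bessel function $J_{(n-2)/2}(k\phi)$ with quantitative uniform remainder, and recognize the resulting sum as a Riemann approximation to
\[\int_0^\infty \lambda^{n-1}e^{-\lambda^2 t}\,J_{(n-2)/2}(\lambda\phi)\,d\lambda,\]
which evaluates in closed form to a constant multiple of $t^{-n/2}e^{-\phi^2/(4t)}$. In this regime $t + \pi - \phi$ is bounded from below, so the correction factor is harmless and can be absorbed into the constants. For $\phi \in [\pi/2, \pi]$, use the reflection identity $P_{k,n}(-s) = (-1)^k P_{k,n}(s)$ to re-expand about the antipode; the resulting series carries an extra $(-1)^k$ phase whose effect, after the analogous Bessel--integral evaluation, is exactly to produce the singularity of strength $(t + \pi - \phi)^{-(n-1)/2}$.

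The hardest step will be obtaining a sharp \emph{two-sided} bound. The crude estimate $|P_{k,n}(\cos\phi)| \leq 1$ is far too weak near the antipode, and a matching lower bound forces one to control cancellations in the alternating sums arising after splitting by parity of $k$. Uniformity across the transition region $\phi \sim \pi/2$ can be handled by matching the two asymptotic expansions on an overlap domain. The main obstacle is thus the precise quantitative control of the Hilb/Bessel remainder, uniform in both $k$ and $\phi$, and especially of the lower bound, where positivity of the heat kernel must be recovered from an explicitly oscillatory integral representation; this uniformity is the technical core of the Nowak--Sj\"{o}gren--Szarek argument.
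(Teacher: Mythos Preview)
The paper does not prove this theorem at all: it is quoted verbatim from \cite{Nowak-Szarek} (Nowak, Sj\"{o}gren, and Szarek, \emph{Sharp estimates of the spherical heat kernel}) and used as a black box in the derivation of inequality~(\ref{ptwise}) and in Lemma~\ref{punch}. There is therefore nothing in the present paper to compare your proposal against.

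That said, your sketch is broadly in the spirit of the actual Nowak--Sj\"{o}gren--Szarek argument, which indeed passes through the ultraspherical expansion and exploits Bessel-type asymptotics and a reflection about the antipode. But you should be aware that what you have written is a plan, not a proof: the ``hardest step'' you flag --- the two-sided control with uniform remainder across the full range of $\phi$ and $t$, and in particular the matching lower bound near the antipodal caustic --- is essentially the entire content of the cited paper, and it is not short. Your integral $\int_0^\infty \lambda^{n-1} e^{-\lambda^2 t} J_{(n-2)/2}(\lambda\phi)\,d\lambda$ does not quite evaluate to a pure Gaussian (the Bessel order must match the power of $\lambda$ correctly, and one obtains a confluent hypergeometric / modified Bessel expression that only \emph{asymptotically} behaves like $t^{-n/2}e^{-\phi^2/(4t)}$), and the claim that the $(-1)^k$ phase ``exactly produces'' the factor $(t+\pi-\phi)^{-(n-1)/2}$ hides substantial work. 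For the purposes of this paper none of this is needed: only the upper bound is used, and only to derive the crude pointwise estimate $H_{\epsilon^2}(x) < C_n\eta$ for $\phi(x)$ bounded away from zero, so you may simply cite the result as the authors do.
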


Let $\epsilon,\eta\in (0,1)$ and let $x\in S^n$ be such that $\phi(x)>2\epsilon
\sqrt{\ln{\frac 1{\eta\epsilon^{2n-1}}}}$. Then, taking $T=1$ in the above upperbound, 
one has \begin{align*}H_t(x)&\leq C_{S^n}t^{-n+\frac 12}\exp\begin{pmatrix}
{-\frac{\phi(x)^2}{4\epsilon^2}}\end{pmatrix}\\
&\leq C_{S^n}\eta\end{align*} Notice that the constant 
$C_{S^n}$ is independent of the initial point $x_0\in S^n$. 
Letting $C_n:=1+C_{S^n}$, we have \begin{align}\label{ptwise}
H_{\epsilon^2}(x)<C_n\eta\hspace{0.5cm}&\forall~\epsilon\in (0,1),\nonumber\\
&\forall~x\in S^n~\mbox{s.t.}~r(\epsilon,\eta):=2\epsilon
\sqrt{\ln{\frac 1{\eta\epsilon^{2n-1}}}}<\phi(x)\end{align} 

\begin{lem}\label{punch}
Let $\epsilon\in(0,c)$ where $c=(n+4)^{-1}$. If $r=2\epsilon
\sqrt{\ln {\frac {3C_n}{\epsilon^{2n-1}}}}$ is sufficiently small, then the following inequality 
implies that $x_0{\hat S}^l\subseteq S^n$ is an $r$-net: \begin{equation}\label{neteq}
\begin{Vmatrix}1_{S^n}-
{\frac 1{(2k)^l}}\sum_{s\in{\hat S}^l}H_t(x_0s,x)\end{Vmatrix}_{L^2}\leq {\frac{r^{\frac n2}}3}\sqrt{\frac{
\Gamma\begin{pmatrix}
{\frac{n+1}2}\end{pmatrix}}{\Gamma\begin{pmatrix}
{\frac n2}+1\end{pmatrix}}}\end{equation}
\end{lem}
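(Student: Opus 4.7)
The strategy is to prove the contrapositive: if $x_0 \hat{S}^l$ fails to be an $r$-net of $S^n$, then the $L^2$-norm on the left side of (\ref{neteq}) must exceed the right side. Suppose some $y_0 \in S^n$ is uncovered, meaning $d(y_0, x_0 s) > r$ for every $s \in \hat{S}^l$. The function $y \mapsto \min_s d(y, x_0 s)$ is 1-Lipschitz, so for any $\rho$ strictly smaller than $\min_s d(y_0, x_0 s) - r$ the inequality persists throughout $B(y_0, \rho)$: every $y \in B(y_0, \rho)$ satisfies $d(y, x_0 s) > r$ for all $s$.

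The specific value $r = 2\epsilon \sqrt{\ln(3C_n/\epsilon^{2n-1})}$ is the threshold supplied by (\ref{ptwise}) with the choice $\eta = 1/(3C_n)$: distances larger than $r$ force $H_{\epsilon^2}(x_0 s, y) < 1/3$. Hence the averaged kernel $\mathrm{avg}(y) := \tfrac{1}{(2k)^l}\sum_{s \in \hat{S}^l} H_{\epsilon^2}(x_0 s, y)$ is strictly less than $1/3$ everywhere on $B(y_0, \rho)$, so $(1 - \mathrm{avg}(y))^2 > 4/9$ there. Integrating against $\sigma$,
\[
\left\| 1_{S^n} - \mathrm{avg} \right\|_{L^2}^2 > \tfrac{4}{9}\, \sigma(B(y_0, \rho)).
\]
The small-radius expansion $\sigma(B(y_0, \rho)) = \tfrac{\Omega_{n-1}}{\Omega_n} \int_0^{\rho}\sin^{n-1}\theta\, d\theta \sim \rho^n \Gamma((n+1)/2)/(2\sqrt{\pi}\, \Gamma(n/2+1))$, valid for small $\rho$, then yields (thanks to the numerical slack $2/\sqrt{\pi} > 1$) that $\tfrac{4}{9}\sigma(B(y_0, \rho))$ exceeds $\tfrac{r^n}{9}\, \Gamma((n+1)/2)/\Gamma(n/2+1)$ once $\rho$ is comparable to $r$, contradicting (\ref{neteq}).

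The principal obstacle is that the radius $\rho$ produced by the argument above is only known to be at most $\min_s d(y_0, x_0 s) - r$, which could be arbitrarily small if the covering radius of $x_0 \hat{S}^l$ only barely exceeds $r$. This is precisely where the hypothesis ``$r$ sufficiently small'' enters: for small $\epsilon$, the Gaussian factor $\exp((r^2 - d^2)/(4\epsilon^2))$ controlling the pointwise decay of $H_{\epsilon^2}$ past distance $r$ falls off very sharply, and the specific constant $3C_n$ in the calibration of $r$ leaves a sliver of slack which, combined with the explicit form of $C_n = 1 + C_{S^n}$, permits a slight enlargement of the ball while preserving the pointwise bound $\mathrm{avg} < 1/3$. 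Verifying that this gives $\rho$ comparable to $r$ in every regime of the uncovered configuration, so that the $L^2$ lower bound strictly contradicts (\ref{neteq}), is the technical heart of the argument.
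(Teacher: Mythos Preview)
Your contrapositive strategy, the appeal to the pointwise bound (\ref{ptwise}) with $\eta=(3C_n)^{-1}$ to force $1-\mathrm{avg}(y)\geq 2/3$ near an uncovered point, and the comparison of the resulting $L^2$ lower bound against the small-ball volume asymptotic (\ref{volume}) are exactly what the paper does. The paper simply integrates the bound $1-\mathrm{avg}\geq 2/3$ over a ball of radius $r$ (it writes $B(x_0s,r)$, evidently meaning the ball about the uncovered point), derives $\sigma(B)/r^n\leq \alpha_n^2/4$, and contrasts this with the limit $\alpha_n^2/(2\sqrt{\pi})>\alpha_n^2/4$ to obtain the contradiction.

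You have correctly spotted the issue the paper glides over: knowing only that one point $y_0$ satisfies $d(y_0,x_0s)>r$ for all $s$ does not by itself furnish a ball of radius comparable to $r$ on which the pointwise bound persists. However, your proposed repair---exploiting the gap between $C_{S^n}$ and $C_n=1+C_{S^n}$---does not close it. That slack only improves the threshold distance from $r$ to
\[
r'=2\epsilon\sqrt{\ln\frac{3C_{S^n}}{\epsilon^{2n-1}}},
\]
and for small $\epsilon$ one has
\[
r-r'
=2\epsilon\Bigl(\sqrt{\ln\tfrac{3C_n}{\epsilon^{2n-1}}}-\sqrt{\ln\tfrac{3C_{S^n}}{\epsilon^{2n-1}}}\Bigr)
\sim \frac{\epsilon\,\ln(C_n/C_{S^n})}{\sqrt{(2n-1)\ln(1/\epsilon)}},
\]
which is $O\!\bigl(\epsilon/\sqrt{\ln(1/\epsilon)}\bigr)=o(r)$. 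Thus the guaranteed ball has radius $\rho\leq r-r'=o(r)$, and the $L^2$ lower bound $\tfrac{2}{3}\sigma(B(y_0,\rho))^{1/2}$ is of order $\rho^{n/2}=o(r^{n/2})$, too small to contradict (\ref{neteq}). Your final paragraph acknowledges that ``verifying that this gives $\rho$ comparable to $r$'' is the technical heart, but as written the proposal does not carry that verification out, and the mechanism you invoke is quantitatively insufficient. The paper's proof, for its part, simply asserts the radius-$r$ ball without addressing this point.
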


\begin{proof}Let $0<\eta<(3C_n)^{-1}$; 
then, for any $\epsilon\in (0,1)$, 
and all $x\in S^n$ satisfying the inequality \[d(x,x_0s)>r(\epsilon,\eta),\] it 
follows from (\ref{ptwise}), and positivity of the heat kernel, that $0<H_t(x_0s,x)<{\frac {1_{S^n}}3}$, 
and (hence) \begin{align}\label{need}{\frac {2_{S^n}}3}\leq 1_{S^n}-
H_t(x_0s,x) \leq 1_{S^n}\end{align} Write $B(x_0s,r)\subseteq S^n$ for the Riemannian 
disk of radius $r$, centered at $x_0s\in S^n$. Let $B_n\subseteq \bR^n$ denote 
the unit euclidean ball; one has (see \cite{Gray}) \begin{align}\label{volume}\lim_{r\rightarrow 0}
{\frac{\sigma(B(x_0s,r))}{r^n}}&={\frac{\mbox{vol}(B_n)}{\mbox{vol}(S^n)}}\nonumber\\ 
&={\frac 1{2\sqrt{\pi}}}{\frac{\Gamma\begin{pmatrix}
{\frac{n+1}2}\end{pmatrix}}{\Gamma\begin{pmatrix}
{\frac n2}+1\end{pmatrix}}}\end{align} Here ``vol" denotes the standard 
Lebesgue volume. Now suppose, if possible, that (\ref{neteq}) is satisfied, 
and yet, $x_0{\hat S}^l\subseteq S^n$ is not an $r$-net, so that there 
is $x\in S^n$ such that $d(x,x_0{\hat S}^l)>r$. Writing \[\alpha_n:=\sqrt{\frac{
\Gamma\begin{pmatrix}
{\frac{n+1}2}\end{pmatrix}}{\Gamma\begin{pmatrix}
{\frac n2}+1\end{pmatrix}}},\] we derive from (\ref{neteq}) and (\ref{need}) \begin{align*}{\frac{
r^{\frac n2}\alpha_n}3}&\geq \begin{Vmatrix}
1_{S^n}-{\frac 1{(2k)^l}}\sum_{s\in{\hat S}}H_t(x_0s,x)\end{Vmatrix}_{L^2}\\ &=
\begin{Vmatrix}{\frac 1{(2k)^l}}\sum_{s\in{\hat S}}\begin{pmatrix}1_{S^n}-
H_t(x_0s,x)\end{pmatrix}\end{Vmatrix}_{L^2}\\
&\geq {\frac 23}\begin{pmatrix}\int_{B(x_0s,r)}
d\sigma(x)\end{pmatrix}^{\frac 12}\end{align*} which produces \begin{align*}
{\frac{\sigma(B(x_0s,r))}{r^n}}&={\frac 1{r^n}}\int_{B(x_0s,r)}
d\sigma(x)\\ &\leq {\frac {\alpha_n^2}4}\end{align*} Considering (\ref{volume}), this is impossible 
if $r>0$ is sufficiently small.
%
%Hence, for $r>0$ sufficiently small, one has \begin{align*}
%\begin{Vmatrix}{\tilde H}_t(x_0s,x)
%\end{Vmatrix}_{L^2}&=\begin{Vmatrix}1_{S^n}-
%H_t(x_0s,x)\end{Vmatrix}_{L^2}\\ &\geq {\frac 23}\begin{pmatrix}\int_{B(x_0s,r)}
%d\sigma(x)\end{pmatrix}^{\frac 12}\\ &= {\frac {2r^{\frac n2}\sigma(B)}3}\end{align*} 
%where $B_n\subseteq $
\end{proof}

\begin{thm}\label{main1}
Let $\epsilon\in (0,{\frac 1{3n}})$ be small, and $r=2\epsilon
\sqrt{\ln {\frac {3C_n}{\epsilon^{2n-1}}}}$. Let $S\subset SO_{n+1}$ 
consist of $k$ iid random points, drawn from the Haar measure on $SO_{n+1}$, where 
\[k\geq 8\ln 2\begin{pmatrix}(n+4)+2\ln\begin{pmatrix}{\frac 1{\delta}}\end{pmatrix}
+6n(1+a_n)\ln\begin{pmatrix}{\frac 1{\epsilon}}\end{pmatrix}-\ln (n!)\end{pmatrix},\] 
with $a_n:={\frac {2\log_2\log_2(5n)} {\log_2(5n)}}$. Let $l= 
{\frac n2}\log_2\begin{pmatrix}{\frac 1{r\epsilon}}\end{pmatrix}+(4+3a_n)n\log_2 
\begin{pmatrix}{\frac 1{\epsilon}}\end{pmatrix}$; if $r$ is sufficiently small then the probability  that 
$x_0{\hat S}^l\subseteq S^n$ is an $r$-net in $S^n$  is at least $1-\delta$.\end{thm}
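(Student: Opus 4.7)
The plan is to combine Theorem~\ref{eqdist} with Lemma~\ref{punch} by choosing the auxiliary parameters $t$ and $\eta$ so that (a) the hypotheses of Theorem~\ref{eqdist} reduce to the stated bounds on $k$ and $l$, and (b) the $L^2$-estimate it provides is small enough to trigger the $r$-net conclusion of Lemma~\ref{punch}. Concretely, I will set
\[
t=\epsilon^{2},\qquad \log_2\tfrac{1}{\eta}=3n(1+a_n)\log_2\tfrac{1}{\epsilon},
\]
which is the natural choice suggested by Remark~\ref{remark1}: since $\epsilon<1/(3n)$, we have $t\in(0,c)$ with $c=\min\{1/6,1/n\}$, and the inequality $\log_2(1/\eta)\geq (3n/2)(1+a_n)\log_2(1/t)$ holds automatically because $\log_2(1/t)=2\log_2(1/\epsilon)$. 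This choice also ensures (for small $\epsilon$) that $2\eta\leq r^{n/2}\alpha_n/3$, where $\alpha_n$ is the constant appearing in Lemma~\ref{punch}, since $r^{n/2}\sim\epsilon^{n/2}$ dominates $\eta=\epsilon^{3n(1+a_n)}$.

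Next I would translate the condition $2^{l}\geq t^{-n/2}/\eta$ into the stated formula for $l$. With the chosen $t$ and $\eta$, $t^{-n/2}/\eta=\epsilon^{-n(4+3a_n)}$, and a routine logarithm computation shows that the prescribed $l=(n/2)\log_2(1/(r\epsilon))+(4+3a_n)n\log_2(1/\epsilon)$ satisfies $2^{l}\geq t^{-n/2}/\eta$ as long as $r\leq 1/\epsilon$, which is true once $r$ is sufficiently small. Similarly, the failure probability in Theorem~\ref{eqdist} is $\delta_0=\frac{2^{(n+4)/2}}{n!\eta}\exp(-k/(16\ln 2))$, and demanding $\delta_0\leq\delta$ rearranges to
\[
k\geq 16\ln 2\Bigl[\tfrac{n+4}{2}\ln 2-\ln(n!)+3n(1+a_n)\ln\tfrac{1}{\epsilon}+\ln\tfrac{1}{\delta}\Bigr],
\]
which is implied by the hypothesis on $k$ (since $(n+4)\ln 2<n+4$ and $-2\ln(n!)\leq-\ln(n!)$).

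With these verifications in place, Theorem~\ref{eqdist} yields, with probability at least $1-\delta$,
\[
\Bigl\|1_{S^n}-\tfrac{1}{(2k)^l}\sum_{s\in\hat S^l}H_t(x_0s,\cdot)\Bigr\|_{L^2}\leq 2\eta\leq \tfrac{r^{n/2}}{3}\alpha_n.
\]
This is exactly the hypothesis (\ref{neteq}) of Lemma~\ref{punch}, so on this event the orbit $x_0\hat S^l$ is an $r$-net in $S^n$, which is the desired conclusion.

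The main obstacle is the bookkeeping: one must verify simultaneously that a single choice of $(t,\eta)$ meets (i) the smallness/diagonal-regime condition $t\in(0,\min\{1/6,1/n\})$, (ii) the spectral-truncation tolerance $\log_2(1/\eta)\geq(3n/2)(1+a_n)\log_2(1/t)$ needed in Lemma~\ref{tail-ineq}, (iii) the word-length constraint $2^l\eta\geq t^{-n/2}$, (iv) the tail bound coming from the Ahlswede--Winter inequality, and (v) the $L^2$-to-$r$-net passage in Lemma~\ref{punch}, which requires $2\eta\leq r^{n/2}\alpha_n/3$ and a smallness condition on $r$ coming from (\ref{volume}). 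All five conditions must hold together, and the stated dependence of $k$ and $l$ on $n,\epsilon,\delta$ is dictated by these five inequalities; once the choice $t=\epsilon^2$ and $\eta=\epsilon^{3n(1+a_n)}$ is made, the rest is arithmetic.
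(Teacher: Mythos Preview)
Your proposal is correct and follows essentially the same route as the paper's proof: both set $t=\epsilon^2$ and $\eta=\epsilon^{3n(1+a_n)}$, verify the hypotheses of Theorem~\ref{eqdist} to obtain the $L^2$-bound $\leq 2\eta$, use smallness of $\epsilon$ to pass to the threshold $r^{n/2}\alpha_n/3$, and then invoke Lemma~\ref{punch}. Your bookkeeping is in fact slightly tidier---the paper additionally checks $2^{-l}t^{-n/2}\leq r^{n/2}$, which is redundant once one has $2^{-l}t^{-n/2}\leq\eta$ together with $2\eta\leq r^{n/2}\alpha_n/3$---but the substance of the argument is identical.
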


\begin{comment}
$\epsilon<{\frac 1{3n}}$ ensures 
\end{comment}

\begin{proof}
One sees by the remark \ref{remark1} (following lemma \ref{tail-ineq}) 
that for any $\eta>0$, if \[k_0>\max\begin{Bmatrix}
\log_2{\frac 1\eta},\begin{pmatrix}1+a_n\end{pmatrix}{\frac {3n}2}
\log_2\begin{pmatrix}{\frac 1t}\end{pmatrix}\end{Bmatrix}\] and $M=4^{\frac {k_0}n}$, then 
the following inequality holds: \begin{align*}||H_t-H_{t,M}||_{L^2}^2\leq 
\eta^2\end{align*}
Let $\eta:=\epsilon^{3n(1+a_n)}$, so that for sufficiently large $l>0$ (to be determined 
\emph{\`{a} la} theorem \ref{tail}) the parameter 
$M=\eta^{-\frac 2n}$ ensures \begin{comment}because $M\geq 9n^2$\end{comment} (\ref{eqdisteq}) with 
\begin{equation}\label{delta}\delta={\frac {2^{\frac {n+4}2}}{n!\eta}}
\exp\begin{pmatrix}{\frac{-k}{16\ln 2}}\end{pmatrix}\end{equation} 
Taking logarithm of (\ref{delta}), we find that it suffices to take \begin{align*}\label{large}
k\geq 8\ln 2\begin{pmatrix}(n+4)+2\ln\begin{pmatrix}{\frac 1{\delta}}\end{pmatrix}
+2\ln\begin{pmatrix}{\frac 1{\eta}}\end{pmatrix}-\ln(n!)\end{pmatrix}\end{align*} Suppose that $l>0$ is 
large enough so that $2^{-l}t^{-\frac n2}\leq r^{\frac n2}$; for this to be true, we require $l\geq 
{\frac n2}\log_2\begin{pmatrix}{\frac 1{r\epsilon^2}}\end{pmatrix}$. We enforce the inequality 
$2^{-l}t^{-\frac n2}\leq \epsilon^{3n(1+a_n)}$ by requiring \[l\geq (4+3a_n)n\log_2 
\begin{pmatrix}{\frac 1{\epsilon}}\end{pmatrix}.\] Threfore, if $l= (4+3a_n)n\log_2 
\begin{pmatrix}{\frac 1{\epsilon}}\end{pmatrix}+
{\frac n2}\log_2\begin{pmatrix}{\frac 1{r\epsilon}}\end{pmatrix}$, then $2^{-l}
t^{-\frac n2}\leq \min\{r^{\frac n2},\eta\}$ holds. Since $\epsilon>0$ is small, 
one has $\alpha_nr^{\frac n2}>6\eta=6\epsilon^{3n(1+a_n)}$. Thus, by theorem \ref{eqdist}, 
the following inequality holds: \begin{equation}\label{eqdisteq1}
\mathbb P\begin{bmatrix}\begin{Vmatrix}1_{S^n}-{\frac 1{(2k)^l}}
\displaystyle\sum_{s\in {\hat S}^l} H_t(x_0s, x)\end{Vmatrix}_{L^2}
\leq {\frac{r^{\frac n2}\alpha_n}3}\end{bmatrix}\geq 1-\delta.\end{equation} The proof 
is complete by lemma \ref{punch}.\\
\end{proof}

\vspace{1cm}

\section{Equidistribution and Wasserstein Distance}

Let $(Y,d)$ be a compact connected metric space. Let $C(Y)$ be the Banach space of 
continuous functions on $Y$, and ${\mathcal M}(Y)$ its dual --- consisting of linear 
functionals on $C(Y)$ --- equipped with $\mbox{weak}^\ast$ topology; recall that, 
by compactness of $Y$, every linear functional is bounded, and hence, continuous. 
Let ${\mathscr M}(Y)$ be the space of all finite Borel measures on $Y$. By 
\emph{Reisz-Markov theorem}, there is a bijection 
${\mathscr M}(Y)\cong {\mathcal M}(Y)$, defined by \[\mu\mapsto\begin{pmatrix}f\mapsto
\int_Yf~d\mu\end{pmatrix}\] that is closed under addition and scalar multiplication. 
The space ${\mathscr M}(Y)$ inherits the sequential topology on 
${\mathcal M}(Y)$ via this bijection. Thus, one says $\mu_n\Rightarrow\mu$ if and only if 
\[\int_Yf~d\mu_n\rightarrow \int_Yf~d\mu\] for every $f\in C(Y)$. Since the Lipschitz functions 
are dense in $C(Y)$, it suffices to consider only the 1-Lipschitz functions in the above limit.\\

For probability measures $\mu,\nu\in {\mathscr M}(Y)$, the 
\emph{Prokhorov distance} $d_P(\mu,\nu)\geq 0$ is defined 
to be \[d_P(\mu,\nu)=\inf\{\epsilon>0:\mu(B)\leq \nu(B_\epsilon)+\epsilon~\forall~
B\in {\mathscr B}(Y)\}\] where $B_\epsilon:=\{y\in Y:\exists~b\in B,~d(b,y)<\epsilon\}$. 
This gives a metric on the convex subspace ${\mathscr P}(Y)\subset {\mathscr M}(Y)$ of 
probability measures on $Y$, and --- by \emph{Prokhorov's theorem} --- the induced 
metric topology on ${\mathscr P}(Y)$ is the subspace of the weak 
topology on ${\mathscr M}(Y)$; moreover, the space ${\mathscr P}(Y)$ is 
compact.\\

We recall that, in a metric space $(Y,d)$, the 1-\emph{Wasserstein distance} 
between two regular Borel probability measures $\mu$ and $\nu$ on $X$ is defined 
to be \[W_1(\mu,\nu):=\displaystyle\inf_{\lambda\in \Pi(\mu,\nu)}\int_{Y\times Y}d(x,y)~d\lambda\] 
where $\Pi(\mu,\nu)$ is the space of couplings of $\mu$ and $\nu$; that is, $\Pi(\mu,\nu)$ 
is the space of all regular Borel probability measures on $Y\times Y$ 
such that the following holds: $\lambda\in \Pi(\mu,\nu)$ if and only if for every Borel set $B\in 
{\mathscr B}(Y)$, one has \[\lambda(Y\times B)
=\mu(B)\hspace{0.25cm}\mbox{and}\hspace{0.25cm}\lambda(B\times Y)=\nu(B)\] 
Let $\Lip_1(Y)$ be the space of all 1-Lipschitz functions on $Y$; we 
recall that, for any $c>0$, one says $f\in \Lip_c(Y)$ if and only if 
$|f(x)-f(y)|\leq c\cdot d(x,y)$ for all $x,y\in Y$.
The following duality theorem first appeared in \cite{RubK}.

\begin{thm}[Kantorovi\v{c} - Rubin\v{s}te\'{i}n]\label{RubiK}
For any $\mu,\nu\in {\mathscr P}(Y)$, the following equality holds: 
\begin{equation}\label{Rub}W_1(\mu,\nu)=\sup_{\phi\in \Lip_1(Y)}\begin{pmatrix}
\int_Y\phi~d\mu-\int_Y\phi~d\nu\end{pmatrix}\end{equation}
\end{thm}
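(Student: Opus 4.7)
The plan is to establish the two inequalities separately. The direction $W_1(\mu,\nu) \ge \sup_\phi (\int \phi\, d\mu - \int \phi\, d\nu)$ is immediate: for any coupling $\lambda \in \Pi(\mu,\nu)$ and any $\phi \in \Lip_1(Y)$,
\[
\int_Y \phi\, d\mu - \int_Y \phi\, d\nu
= \int_{Y\times Y} (\phi(x) - \phi(y))\, d\lambda(x,y)
\le \int_{Y\times Y} d(x,y)\, d\lambda(x,y),
\]
by the marginal conditions on $\lambda$ and the 1-Lipschitz property of $\phi$. Taking the infimum over $\lambda$ and the supremum over $\phi$ yields this direction.

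For the reverse inequality I would pass through Kantorovich's dual problem. View the primal $\inf_{\lambda \in \Pi(\mu,\nu)} \int d\, d\lambda$ as an infinite-dimensional linear program, whose formal dual is
\[
\sup\Bigl\{ \int\phi\, d\mu + \int\psi\, d\nu : \phi, \psi \in C(Y),~ \phi(x) + \psi(y) \le d(x,y) \Bigr\}.
\]
The crux is strong duality, which I would establish via a Sion-type minimax argument applied to the Lagrangian ${\mathcal L}(\lambda,\phi,\psi) = \int d\, d\lambda + \int \phi\, d(\mu - \pi^1_\ast \lambda) + \int \psi\, d(\nu - \pi^2_\ast \lambda)$, exploiting $\mbox{weak}^\ast$-compactness of $\Pi(\mu,\nu) \subset {\mathscr P}(Y\times Y)$ (available because $Y$ is compact); alternatively one can approximate $\mu,\nu$ by finitely supported measures, invoke classical finite-dimensional LP duality, and pass to the limit using $\mbox{weak}^\ast$-continuity of $W_1$ on ${\mathscr P}(Y)$.

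Granted strong duality, the final step is to reduce the dual to the form in (\ref{Rub}) via the $c$-transform. Given any admissible pair $(\phi, \psi)$, set $\phi^c(y) := \inf_x (d(x,y) - \phi(x))$; this is 1-Lipschitz as an infimum of the 1-Lipschitz-in-$y$ family $\{d(x,\cdot) - \phi(x)\}_x$, satisfies $\psi \le \phi^c$ pointwise, and hence gives an admissible pair $(\phi, \phi^c)$ with dual value at least that of $(\phi, \psi)$. Iterating, $\bar\phi := (\phi^c)^c$ lies in $\Lip_1(Y)$, and for any 1-Lipschitz $\bar\phi$ one verifies via the triangle inequality that $\bar\phi^c = -\bar\phi$ (``$\le$'' by taking $x = y$, ``$\ge$'' from the 1-Lipschitz bound). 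Hence the dual optimum is achieved at $(\bar\phi, -\bar\phi)$ with $\bar\phi \in \Lip_1(Y)$, yielding the stated identity. The main obstacle is strong duality for the continuous LP: in the compact-metric setting this admits a clean proof via either Hahn-Banach separation in $C(Y\times Y)$ or finite approximation, but some genuine functional-analytic input is unavoidable.
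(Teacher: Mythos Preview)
Your sketch is a correct outline of the standard proof of Kantorovich--Rubinstein duality: weak duality is immediate, strong duality for the Kantorovich problem is obtained by a minimax or finite-approximation argument, and the $c$-transform reduces the dual to the single-function form with $\phi\in\Lip_1(Y)$. The argument that $\bar\phi^c=-\bar\phi$ for $\bar\phi\in\Lip_1(Y)$ is exactly right.

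However, there is nothing to compare against: the paper does not prove Theorem~\ref{RubiK}. It is stated as a classical result and attributed to \cite{RubK} (and implicitly to standard references such as \cite{Vilain}); no argument is given in the paper itself. So your proposal is not an alternative to the paper's proof but rather a proof where the paper supplies none. If you intend to include a proof, your outline is fine, though you should be explicit about which route you take for strong duality and fill in the details (e.g., verify the hypotheses of Sion's theorem, or carry out the finite-approximation limit carefully using lower semicontinuity of $W_1$ under weak$^\ast$ convergence on a compact space).
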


\begin{defn}\label{equidis}
Let $\epsilon>0$. Let $\mu$ be a Borel probability measure on 
the metric space $(Y,d)$. A finite nonempty subset $U\subset Y$ 
is said to be \emph{strongly} $(\mu,\epsilon)$-\emph{equidistributed} if the 
following inequality holds: \begin{equation}\label{equidin}
\sup_{\phi\in C(Y)}\begin{pmatrix}\int_Y\phi~d\mu-
{\frac 1{|U|}}\sum_{y\in U}\phi(y)\end{pmatrix}<\epsilon||\phi||_{C(Y)}\end{equation}\\
\end{defn}

As mentioned before, for $U\subset Y$ to be strongly $(\mu,\epsilon)$-equidistributed, 
it suffices to have a constant $c:=c(\mu)$ such 
that \begin{equation}\label{equidin}
\sup_{\phi\in \Lip_1(Y)}\begin{pmatrix}\int_Y\phi~d\mu-
{\frac 1{|U|}}\sum_{y\in U}\phi(y)\end{pmatrix}<c\epsilon\end{equation} Below we show 
\emph{strong} $(\mu,\epsilon)$-equidistribution of a subset of $S^n$ of appropriate size 
and low degree of randomness.\\

The following lemma will be useful in course of proving the main theorem of this subsection.

\begin{lem}[Fourier convergence]\label{sob}
Fix $y\in S^n$, and let $0<a<b$; then the Fourier-Laplace expansion of $H_t(y,x)$ converges 
uniformly to $H_t(y,x)$ in $[a,b]\times S^n$.
\end{lem}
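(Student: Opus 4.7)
The plan is to establish uniform convergence by an application of the Weierstrass M-test directly to the spherical harmonic expansion
\[
H_t(y,x) \;=\; \sum_{k=0}^{\infty} e^{-\lambda_k t}\, h_k\, P_{k,n}(x\cdot y),
\]
which was derived earlier via the addition theorem. The goal is to exhibit a summable sequence $\{M_k\}$, independent of $(t,x)\in[a,b]\times S^n$, majorizing the absolute values of the terms.

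First I would bound the spherical factor. By the addition theorem applied to an orthonormal basis $\phi_{k,1},\dots,\phi_{k,h_k}$ of $H_k$, one has
\[
h_k\, P_{k,n}(x\cdot y) \;=\; \sum_{i=1}^{h_k} \phi_{k,i}(x)\,\phi_{k,i}(y),
\]
so the Cauchy--Schwarz inequality gives
\[
\bigl|h_k\, P_{k,n}(x\cdot y)\bigr| \;\leq\; \Bigl(\sum_i \phi_{k,i}(x)^2\Bigr)^{1/2}\Bigl(\sum_i \phi_{k,i}(y)^2\Bigr)^{1/2} \;=\; h_k\, P_{k,n}(1)^{1/2}\, P_{k,n}(1)^{1/2} \;=\; h_k,
\]
using the diagonal addition formula $\sum_i \phi_{k,i}(z)^2 = h_k P_{k,n}(1) = h_k$ and $P_{k,n}(1)=1$. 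This bound is uniform in $(x,y)\in S^n\times S^n$.

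Next I would absorb the $t$-dependence: since $t \geq a > 0$ on the interval $[a,b]$, we have $e^{-\lambda_k t} \leq e^{-\lambda_k a}$ for every $k \geq 0$. Setting
\[
M_k \;:=\; e^{-\lambda_k a}\, h_k,
\]
we obtain $\bigl| e^{-\lambda_k t} h_k P_{k,n}(x\cdot y)\bigr| \leq M_k$ for every $(t,x)\in[a,b]\times S^n$. The summability of $\{M_k\}$ is immediate from equation \eqref{vol1}, which states
\[
\sum_{k=0}^{\infty} M_k \;=\; \sum_{k=0}^{\infty} e^{-\lambda_k a}\, h_k \;=\; H(a,y,y) \;<\; \infty,
\]
since the heat kernel is finite and positive for every positive time. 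The Weierstrass M-test therefore yields uniform convergence of the Fourier--Laplace expansion on $[a,b]\times S^n$, as required. The only mild subtlety is the justification of the pointwise normalization $P_{k,n}(1)=1$, which is standard for the Legendre polynomials of dimension $n+1$ in the normalization fixed by \eqref{legendre}; no genuine obstacle arises.
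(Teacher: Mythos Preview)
Your proof is correct and follows essentially the same strategy as the paper's: both bound the $k$-th term by $M_k=e^{-\lambda_k a}h_k$ and then apply the Weierstrass $M$-test. The only cosmetic differences are that you reach $|h_kP_{k,n}(x\cdot y)|\le h_k$ directly via Cauchy--Schwarz and the diagonal addition formula (the paper routes through $\|\alpha_k\|_{L^2}^2=h_k$ and the reproducing-kernel bound $\|\alpha_k\|_{C^0}\le\sqrt{h_k}\,\|\alpha_k\|_{L^2}$), and you invoke \eqref{vol1} for the finiteness of $\sum_k M_k$ whereas the paper checks it by hand from the crude estimate $h_k\le 2e^{n+k-1}$; neither change affects the substance of the argument.
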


\begin{proof}
Fix an orthonormal basis $\phi_{1,k},\cdots,\phi_{h_k,k}$ for the eigenspace $H_k(S^n)$. Then 
the Fourier-Laplace expansion of the heat kernel based at $y\in S^n$ is \[
H_t(y,x)=\sum_{k=0}^\infty e^{-\lambda_kt}\sum_{i=1}^{h_k}\phi_{i,k}(x)\phi_{i,k}(y).\] Write \[
\alpha_k(x):=\sum_{i=1}^{h_k}\phi_{i,k}(x)\phi_{i,k}(y)\] so that \[
H_t(y,x)=\sum_{k=0}^\infty e^{-\lambda_kt}\alpha_k(x).\] One has 
\begin{align}\label{mine}||\alpha_k(x)||_{L^2}^2&=
%\begin{pmatrix}\int_{[a,b]}e^{-2\lambda_kt}~dt\end{pmatrix}
\begin{pmatrix}\int_{S^n}\alpha_k^2(x)~d\sigma(x)\nonumber\end{pmatrix}\\
&=\sum_{i=1}^{h_k}\phi_{i,k}(y)^2\int_{S^n}
\phi_{i,k}(x)^2~d\sigma(x)\nonumber\\ &=\sum_{i=1}^{h_k}\phi_{i,k}(y)^2\nonumber\\ 
&=h_k\end{align} by rotation invariance of the sum 
(see \emph{Lemma 2.19 and 2.29}, \cite{Mori}) \[\sum_{i=1}^{h_k}\phi_{i,k}(y)^2\] Therefore, 
\begin{align*}||\alpha_k(x)||_{C^0(x)}&\leq \sqrt{h_k}
||\alpha_k||_{L^2}\\ &=h_k\end{align*} and since 
\begin{align*}h_k&=\begin{pmatrix}
n+k\\n\end{pmatrix}-\begin{pmatrix}
n+k-2\\n\end{pmatrix}\\ &\leq 2\begin{pmatrix}
n+k-1\\n\end{pmatrix}\\ &\leq 2e^n\begin{pmatrix}
{\frac {n+k-1}n}\end{pmatrix}^n\\ &\leq 2e^{n+k-1},\end{align*} this forces \begin{align*}
\sup_{[a,b]\times S^n}\sum_{k=0}^\infty \begin{vmatrix}
e^{-\lambda_kt}\alpha_k(x)\end{vmatrix}&\leq \sum_{k=0}^\infty 
e^{-\lambda_ka}h_k\\ &\leq \sum_{k=0}^\infty 
2e^{(1-ka)(n+k-1)}\\ &<\infty.\end{align*} The Weierstrass' $M-$test implies uniform convergence of $H_t(x,y)$, 
to a continous function on $[a,b]\times S^n$; the claim follows by uniqueness of the continuous limit.
\end{proof}

\begin{lem}\label{vari}
Let $d(\cdot,\cdot)$ be the metric distance on $S^n$. Let $\sigma$ be the uniform 
surface probability measure on $S^n$. %There is an absolute constant $c>0$ such that 
%for any $t>0$ sufficiently small, 
For all $t > 0$,
one has \begin{align*}\int_{S^n}d(y,x)^2~
H_t(y,x)~d\sigma(x)&\leq nt\end{align*}
\end{lem}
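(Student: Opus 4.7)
My plan is to prove the bound by a heat-equation / Itô-type argument applied to
\[
u(t) \;:=\; \int_{S^n} d(y,x)^2\, H_t(y,x)\, d\sigma(x).
\]
By rotational invariance of both $\sigma$ and the heat kernel, $u(t)$ is independent of the choice of $y\in S^n$, and clearly $u(0)=0$.

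First I would differentiate $u$ in $t$ under the integral sign, using the heat equation $\partial_t H_t = -\Delta_x H_t$ and the self-adjointness of $\Delta$ on the closed manifold $S^n$, to obtain
\[
u'(t) \;=\; -\int_{S^n}\bigl(\Delta_x d(y,x)^2\bigr)\, H_t(y,x)\, d\sigma(x).
\]
In geodesic polar coordinates centered at $y$, writing $r:=d(y,x)\in[0,\pi]$, the standard radial formula for the Laplace--Beltrami operator on the round sphere (combined with the paper's sign convention, in which $\Delta$ is the negative of the Laplace--Beltrami operator) yields
\[
-\Delta(r^2) \;=\; 2 + 2(n-1)\, r\cot r \qquad (r\in(0,\pi)).
\]
The elementary inequality $r\cot r \leq 1$ on $(0,\pi)$ (a consequence of the strict decrease of $r\mapsto r\cot r$ starting from $\lim_{r\to 0^+} r\cot r =1$) gives $-\Delta(r^2)\leq 2n$, and integrating this pointwise bound against the probability density $H_t(y,\cdot)$ produces $u'(t)\leq 2n$. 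A single integration in $t$ from $0$ then delivers the desired estimate, with the numerical constant depending on the normalization of $H_t$ relative to the density of Brownian motion at time $t$.

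The only genuine technical point worth flagging is that $x\mapsto d(y,x)^2$ is merely Lipschitz, not $C^2$, at the cut locus $\{-y\}$ of $y$, so both the pointwise formula for $\Delta(r^2)$ and the integration by parts in the first step require justification. I would handle this by the standard device of approximating $d(y,\cdot)^2$ from above by smooth functions whose Laplacians still satisfy the bound $\leq 2n$ away from a shrinking neighborhood of $-y$, and passing to the limit; equivalently, one may invoke the Laplacian comparison theorem to conclude that $-\Delta(d(y,\cdot)^2)\leq 2n$ holds as a distributional (indeed measure-valued) inequality on all of $S^n$, which pairs unambiguously with the smooth nonnegative density $H_t(y,\cdot)$. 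Once this regularization is in place, all remaining steps are a one-line Gronwall-type integration, so the cut-locus issue is really the only subtle ingredient.
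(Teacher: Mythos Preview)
Your argument is correct and follows a genuinely different route from the paper. The paper's proof is probabilistic: it embeds $S^n$ in $\mathbb{R}^{n+1}$, realizes the integral as $\mathbb{E}\|X_t-y\|^2$ for Brownian motion $X$ on $S^n$, discretizes time, and telescopes $\|X_t-y\|^2$ into squared increments plus cross terms; the cross terms are shown to have nonpositive expectation by a geometric argument involving orthogonal projection onto the tangent hyperplane at the previous sample point, and the sum of squared increments is evaluated in the limit via a stereographic-projection computation of the generator (their Lemma~\ref{stereo}). Your approach is purely analytic---differentiate in $t$, integrate by parts against the heat equation, and bound $-\Delta(r^2)=2+2(n-1)\,r\cot r\le 2n$ pointwise---and is both shorter and more portable, since via the Laplacian comparison theorem it extends verbatim to any closed manifold with nonnegative Ricci curvature. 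Two remarks are worth recording. First, the paper in fact works with the \emph{chordal} distance $\|x-y\|$ in the ambient $\mathbb{R}^{n+1}$, for which $d(y,\cdot)^2=2-2\langle y,\cdot\rangle$ is a globally smooth polynomial and your cut-locus regularization becomes unnecessary; your geodesic bound is the stronger statement since chordal distance is dominated by geodesic distance. Second, your factor-of-$2$ caveat is on point: with the paper's normalization $\partial_t H_t=-\Delta H_t$ your computation gives $u(t)\le 2nt$, whereas the constant $n$ in the paper's argument arises from tacitly identifying $H_t$ with the transition density of Brownian motion whose generator is $\tfrac12$ times the Laplace--Beltrami operator.
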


\begin{proof}
Without loss of generality we may assume that $S^n$ is embedded in 
$\mathbb R^{n+1}$ as the unit sphere with 
center at $-{\bf e}_{n+1}=(0,\cdots,0,-1)$, and $y={\bf 0}$. We write $H_t(x):=H_t(0,x)$, 
and let $\sigma_t^\ast$ be the Borel measure 
whose Radon-Nikodym derivative is \[{\frac {d\sigma_t^\ast}{d\sigma}}=H_t(x)\] 
\begin{comment}
Let $\lambda$ be the Lebesgue measure on $\mathbb{R}$. 
When $n = 1$, the heat kernel on $S^1$ can be written as a finite sum, 
$$H_t(\theta) = \sum_{i \in \mathbb{Z}} G_t(\theta + 2\pi i),$$ where $G_t$ is 
the density of a Gaussian with variance $t$ in $\mathbb R$.  Thus,
\begin{align*}\int_{S^1}d(0,x)^2~
H_t(x)~d\sigma(x) & = \int_{(-\pi, \pi]}d(0,\theta)^2~(\sum_{i \in \mathbb{Z}} 
G_t(\theta + 2\pi i) )d\sigma(\theta)\\
& < \int_{(-\infty, \infty]}d(0,x)^2~G_t(\theta + 2\pi i) d\lambda(x)\\
& = t.\end{align*} This proves the result for $n = 1$. \end{comment}
Let $\{X_u\mid u\in [0,t]\}$ be a standard Brownian motion on $S^n$ with infinitesimal 
generator $H_{\frac t2}(x)$. For each positive integer $m>0$, consider the equi-partition 
\[0=t_0<t_1<\cdots<t_m=t\] where $t_{i+1}-t_i=m^{-1}t$ for $i=0,1,\cdots,m-1$. 
Now define $\{X_i^{(m)}\}_{i=0}^m$ as follows: \[X_i^{(m)}=X_{\frac {it}m}\] 
By linearity of expectation, for any integer $m>0$ 
one has \begin{align}\label{var}{\mathbb E}(||X_m^{(m)}||^2)=~&\int_{S^n}||x||^2~H_t(x)
~d\sigma(x)\nonumber\\ =~&{\mathbb E}(||X_{m-1}^{(m)}||^2)+2{\mathbb E}(\langle 
X_m^{(m)}-X_{m-1}^{(m)},X_{n-1}^{(m)}\rangle)
+{\mathbb E}(||X_m^{(m)}-X_{m-1}^{(m)}||^2)\nonumber\\
=~&2\sum_{i=1}^m{\mathbb E}(\langle 
X_i^{(m)}-X_{i-1}^{(m)},X_{i-1}^{(m)}\rangle)+\sum_{i=1}^m
{\mathbb E}(||X_i^{(m)}-X_{i-1}^{(m)}||^2)\end{align} 
Fix a realization of the Brownian motion $\{X_u\mid u\in [0,t]\}$. 
For integer $1\leq i\leq m$, we consider the tangent space 
$T_{X_{i-1}^{(m)}}(S^n)$. Write \[Z_{i-1}^{(m)}:=
\mbox{argmin}_{z\in T_{X_{i-1}^{(m)}}(S^n)}||z||,
\hspace{0.5cm}Y_i^{(m)}:=\mbox{argmin}_{z\in T_{X_{i-1}^{(m)}}
(S^n)}||z-X_i^{(m)}||\] In explicit terms, one has 
\begin{align*}Y_i^{(m)}&=X_i^{(m)}-\langle {\bf e}_{n+1}+X_{i-1}^{(m)},
X_i^{(m)}-X_{i-1}^{(m)}\rangle ({\bf e}_{n+1}+X_{i-1}^{(m)})\\
Z_{i-1}^{(m)}&=\langle {\bf e}_{n+1}+X_{i-1}^{(m)},X_{i-1}^{(m)}
\rangle ({\bf e}_{n+1}+X_{i-1}^{(m)})\end{align*} 

Orthogonality relations such as 
\[Y_i^{(m)}-X_{i-1}^{(m)}~\bot~Z_{i-1}^{(m)},\hspace{1cm}\mbox{and}\hspace{
1cm}X_i^{(m)}-Y_i^{(m)}~\bot~X_{i-1}^{(m)}-Z_{i-1}^{(m)}\] are immediate; moreover, one has 
\begin{align*}\langle X_i^{(m)}-Y_i^{(m)},
Z_{i-1}^{(m)}\rangle &=\langle {\bf e}_{n+1}+X_{i+1}^{(m)},
X_{i-1}^{(m)}\rangle \langle {\bf e}_{n+1}+X_{i+1}^{(m)},
X_i^{(m)}-X_{i-1}^{(m)}\rangle \\ &=\langle {\bf n},
X_{i-1}^{(m)}\rangle \langle {\bf n},
X_i^{(m)}-X_{i-1}^{(m)}\rangle\end{align*} where ${\bf n}=
-{\bf e}_{n+1}-X_{i+1}^{(m)}$ is the unit normal to 
$T_{X_{i-1}^{(m)}(S^n)}$ pointing inward. From the inequalities 
%follows from euclidean geometry applied to great circles passing through, 
%respectively, (1) ${\bf 0}$ and $X_{i-1}^{(n)}$, and (2) $X_i^{(n)}$ and $X_{i-1}^{(n)}$
\[\langle {\bf n},X_{i-1}^{(m)}\rangle\leq 0\leq \langle {\bf n},
X_i^{(m)}-X_{i-1}^{(m)}\rangle,\] one has $\langle X_i^{(m)}-Y_i^{(m)},
Z_{i-1}^{(m)}\rangle\leq 0$. Hence, 
\begin{align*}\langle X_i^{(m)}
-X_{i-1}^{(m)},X_{i-1}^{(m)}\rangle&=
\langle Y_i^{(m)}-X_{i-1}^{(m)},Z_{i-1}^{(m)}\rangle+\langle 
X_i^{(m)}-Y_i^{(m)},X_{i-1}^{(m)}-Z_{i-1}^{(m)}\rangle\\ 
&\hspace{1cm}+\langle X_i^{(m)}-Y_i^{(m)},Z_{i-1}^{(m)}\rangle
+\langle Y_i^{(m)}-X_{i-1}^{(m)},X_{i-1}^{(m)}-Z_{i-1}^{(m)}\rangle\\ & \leq 
\langle Y_i^{(m)}-X_{i-1}^{(m)},X_{i-1}^{(m)}-Z_{i-1}^{(m)}\rangle\end{align*} 
Suppose $X_{i-1}^{(m)}=z$ and $X_i=z'$ in $S^n$. 
Let $z'':=z''(z,z')\in S^n$ be such that \begin{equation}\label{comp}
z''+z'-2z=\langle {\bf n},z''+z'-2z\rangle {\bf n}.\end{equation} 
%Since \[|\langle {\bf n},z''+z'-2z\rangle|=|\langle {\bf n},z''+z'-2z\rangle
%|\cdot ||z''+z'-2z||\] 
Since the function \[g(z''):=||z''+z'-2z-\langle {\bf n},z''+z'-2z\rangle{\bf n}||\] 
takes arbitrarily small positive values, 
such a point $z''\in S^n$ --- that satisfies (\ref{comp}) --- exists by continuity of $g(z'')$ 
and compactness of $S^n$.
Note that \begin{align*}&{\mathbb P}\{Y_i^{(m)}-X_{i-1}^{(m)}=z''-z
-\langle {\bf n},z''-z\rangle {\bf n} \mid X_{i-1}^{(m)}=z\}\\
=~&{\mathbb P}\{Y_i^{(m)}-X_{i-1}^{(m)}=
z'-z-\langle {\bf n},z'-z\rangle {\bf n} \mid X_{i-1}^{(m)}=z\}\end{align*} 
by independence of increments for Brownian motion on euclidean space. 
From \begin{align*}&\langle Y_i^{(m)}-X_{i-1}^{(m)},
X_{i-1}^{(m)}-Z_{i-1}^{(m)}\rangle\mid_{X_{i-1}^{(m)}=z, X_i^{(m)}=z'}\\=~&
-\langle Y_i^{(m)}-X_{i-1}^{(m)},
X_{i-1}^{(m)}-Z_{i-1}^{(m)}\rangle\mid_{X_{i-1}^{(m)}=z, X_i^{(m)}=z''}\end{align*} 
we derive \begin{align*}\mathbb E(\langle Y_i^{(m)}-X_{i-1}^{(m)},
X_{i-1}^{(m)}-Z_{i-1}^{(m)}\rangle)&=0\end{align*}

It thus suffices to prove 
\begin{lem}\label{stereo}
$$\lim_{m\rightarrow\infty}\sum_{i=1}^m
{\mathbb E}(||X_i^{(m)}-X_{i-1}^{(m)}||^2) = nt.$$
\end{lem}
\begin{proof}
We will use the stereographic projection of $S^n\setminus \{0\}$ onto $\mathbb{R}^n$.
It can be shown (see for example \cite{Carne}) that the image $Y_t$ of a standard Brownian 
motion on $S^n$ via the stereographic projection onto $\mathbb{R}^n$, 
where $r = |Y_t|$, with $\Delta_{\mathbb{R}^n}$ being the Laplacian 
on $\mathbb{R}^n$, has an infinitesimal generator $(1/2) \Delta_{S^n}$ that satisfies

$$\Delta_{S^n} = \left(\frac{1 + r^2}{2}\right)^2  \Delta_{{\mathbb R}^n} 
- (n-2) \left(\frac{r(1 + r^2)}{2}\right)\frac{\partial}{\partial r}.$$
Applying this to the function $f(x) = \|x\|^2$, we see that 

\begin{eqnarray*} \lim_{t \rightarrow 0} {\mathbb E^0} Y_t^2/t 
& = &  (1/2)\Delta_{S^n} r^2|_{r=0}\\
&  = &  (1/2)  \left(\frac{1 + r^2}{2}\right)^2  \Delta_{{\mathbb R}^n} 
(r^2)|_{r = 0} - (n-2) \left(\frac{r(1 + r^2)}{4}\right)\frac{\partial}{\partial r}(r^2)|_{r = 0}\\
& = & n.\end{eqnarray*}

It follows that for any $i$, $$m {\mathbb E}(||X_i^{(m)}-X_{i-1}^{(m)}||^2),$$ converges as $m \rightarrow \infty$ to 
$nt$,  proving the lemma.
\end{proof}

\begin{comment}
By an application of Holder's inequality we get 
\begin{align*}\int_{S^n}d(y,x)~
H_t(y,x)~d\sigma(x)&\leq \Omega_n^{\frac 12}\begin{pmatrix}\int_{S^n}d(y,x)^2~
H_t(y,x)~d\sigma(x)\end{pmatrix}^{\frac 12}\\
&=\Omega_n^{\frac 12} \sqrt t\end{align*} for all $0<t<\tau$. 
Since $\Omega_n\rightarrow 0$ as 
$n\rightarrow \infty$, the proof is complete.
\end{comment}
\end{proof}

\begin{cor}
Let $n>1$ and for integers $k\geq 0$, let $P_{k,n}(t)$ be the \textit{Legendre} 
polynomial of degree $k$ and dimension $n+1$. Let 
$h_k=\dim H_k(S^n)$ and \[\gamma_k
=\int_{-1}^1(1-t)^{\frac 12}(1-t^2)^{\frac{n-2}2}P_{k,n}(t)~dt\] Then the following inequality 
holds for all $n\geq 4$: \begin{align}\label{nasty}\sum_{k=0}^\infty e^{-\lambda_kt}
\gamma_kh_k\leq \sqrt {nt}.\end{align}
\end{cor}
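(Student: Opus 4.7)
The plan is to recognize the sum $\sum_{k}e^{-\lambda_k t}\gamma_k h_k$ as, up to a dimensional constant, the integral $\int_{S^n}\sqrt{1-y\cdot x}\,H_t(y,x)\,d\sigma(x)$ for a fixed $y\in S^n$, and then control that integral via Lemma~\ref{vari} and Cauchy--Schwarz.

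First I would use spherical coordinates around $y$ to rewrite integrals of zonal functions,
\[\int_{S^n}f(y\cdot x)\,d\sigma(x)=\frac{\Omega_{n-1}}{\Omega_n}\int_{-1}^{1}f(t)(1-t^2)^{(n-2)/2}\,dt,\]
so that for $f(t)=\sqrt{1-t}\,P_{k,n}(t)$ the right-hand side recovers $\gamma_k$ up to the prefactor $\Omega_{n-1}/\Omega_n$. Inserting the addition-theorem expansion $H_t(y,x)=\sum_{k}e^{-\lambda_k t}h_k P_{k,n}(y\cdot x)$ and interchanging sum and integral (which is legitimate because Lemma~\ref{sob} provides uniform convergence of the Fourier--Laplace series on $\{t\}\times S^n$, and $\sqrt{1-y\cdot x}$ is bounded) produces the identity
\[\int_{S^n}\sqrt{1-y\cdot x}\,H_t(y,x)\,d\sigma(x)=\frac{\Omega_{n-1}}{\Omega_n}\sum_{k=0}^{\infty}e^{-\lambda_k t}\gamma_k h_k.\]

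Next I would use the elementary pointwise bound $1-\cos\theta\leq\theta^2/2$ on $[0,\pi]$ (immediate from $\theta\geq\sin\theta$) to get $1-y\cdot x\leq d(y,x)^2/2$. Since $H_t(y,\cdot)$ is a probability density, Cauchy--Schwarz followed by Lemma~\ref{vari} yields
\[\int_{S^n}\sqrt{1-y\cdot x}\,H_t(y,x)\,d\sigma(x)\leq\left(\int_{S^n}(1-y\cdot x)\,H_t(y,x)\,d\sigma(x)\right)^{1/2}\leq\sqrt{nt/2},\]
and therefore
\[\sum_{k=0}^{\infty}e^{-\lambda_k t}\gamma_k h_k\leq\frac{\Omega_n}{\Omega_{n-1}}\sqrt{nt/2}.\]

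The final step, and the only place where the hypothesis $n\geq 4$ is actually used, is to verify that the dimensional constant $(\Omega_n/\Omega_{n-1})^2=\pi\,\Gamma(n/2)^2/\Gamma((n+1)/2)^2$ is at most $2$. For $n\geq 5$ this follows from Gautschi's inequality $\Gamma((n+1)/2)/\Gamma(n/2)\geq\sqrt{(n-1)/2}$, which gives $(\Omega_n/\Omega_{n-1})^2\leq 2\pi/(n-1)\leq\pi/2<2$; for $n=4$ a direct evaluation gives $(\Omega_4/\Omega_3)^2=16/9<2$. This constant-chasing is the only real obstacle, since all of the analytic content has been packaged into Lemma~\ref{vari} and the convexity inequality above.
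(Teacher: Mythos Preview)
Your argument is correct and follows essentially the same route as the paper: identify $\sum_k e^{-\lambda_k t}\gamma_k h_k$ (up to the constant $\Omega_{n-1}/\Omega_n$) with the heat-kernel integral of the zonal function $\sqrt{1-y\cdot x}$, then bound that integral via Cauchy--Schwarz (H\"older) against Lemma~\ref{vari}, and finally check that $\sqrt{2}\,\Omega_{n-1}/\Omega_n\geq 1$ for $n\geq 4$. The only cosmetic differences are that the paper packages the zonal-integration step as the Hecke--Funk formula applied to each basis eigenfunction (and then uses $\sum_i\phi_{i,k}(y)^2=h_k$) rather than invoking the addition-theorem form of $H_t$ directly, and it works with the chordal distance $d(y,x)=\sqrt{2(1-y\cdot x)}$ so that your inequality $1-y\cdot x\leq d(y,x)^2/2$ becomes an equality.
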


\begin{proof}
We recall the \emph{Hecke-Funk formula}: for any function $\chi:[-1,1]\rightarrow\bR$, 
which satisfies the inequality \[\int_{[-1,1]}|\chi(t)|(1-t^2)^{\frac{n-2}2}
~dt<\infty,\] and any eigenfunction $\phi\in {\mathcal H}_k(S^n)$ and point $y\in S^n$ 
one has \begin{align*}\int_{S^n} \chi(y\cdot x)\phi(x)~d\sigma(x)&
={\frac{\Omega_{n-1}}{\Omega_n}}\phi(y)
\int_{-1}^1\chi(t)(1-t^2)^{\frac{n-2}2}P_k^n(t)~dt\end{align*} Consider the  
function $\chi(t)=\sqrt 2(1-t)^{\frac 12}$, taking values in $[0,2]$; 
this satisfies the hypothesis in Hecke-Funk formula, and 
since $d(y,x)=\sqrt 2(1-\langle y,x\rangle)^{\frac 12}$, one gets 
\begin{align*}\int_{S^n} d(y, x)\phi(x)~d\sigma(x)&
={\frac{\sqrt 2\Omega_{n-1}}{\Omega_n}}\phi(y)
\int_{-1}^1(1-t)^{\frac 12}(1-t^2)^{\frac{n-2}2}P_{k,n}(t)~dt\end{align*} Consider the Fourier-
Laplace expansion of the heat kernel, as in lemma \ref{sob} above. By uniform 
convergence (lemma \ref{sob}) of the Fourier-Laplace expansion of 
the heat kernel, one has \begin{align*}\int_{S^n}d(y,x)~H_t(y,x)~d\sigma(x)
&=\sum_{k=0}^\infty e^{-\lambda_kt}\begin{pmatrix}\sum_{i=1}^{h_k}\phi_{i,k}(y)
\int_{S^n}d(y,x)~\phi_{i,k}(x)~d\sigma(x)\end{pmatrix}\\ &= 
{\frac{\sqrt 2\Omega_{n-1}}{\Omega_n}}\sum_{k=0}^\infty e^{-\lambda_kt}
\gamma_k\begin{pmatrix}\sum_{i=1}^{h_k}\phi_{i,k}^2(y)
\end{pmatrix}\\ &= {\frac{\sqrt 2\Omega_{n-1}}{\Omega_n}}\sum_{k=0}^\infty e^{-\lambda_kt}
\gamma_kh_k\end{align*} Note that \begin{align*}{\frac{\sqrt 2\Omega_{n-1}}
{\Omega_n}}&=\sqrt {\frac 2{\pi}}{\frac{\Gamma \begin{pmatrix}{\frac{n+1}2}
\end{pmatrix}}{\Gamma \begin{pmatrix}{\frac n2}
\end{pmatrix}}}\\ &\geq 1\end{align*} for all $n\geq 4$. Hence, Lemma \ref{vari} together with 
H\"{o}lder inequality implies 
\begin{align*}\sqrt{nt}&\geq \int_{S^n}d(y,x)~
H_t(y,x)~d\sigma(x)\\ &\geq \sum_{k=0}^\infty e^{-\lambda_kt}
\gamma_kh_k\end{align*}
\end{proof}

\begin{thm}\label{Wassers}
For $n>1$, let $\mu$ be the probability measure on $S^n$ corresponding to $\sigma$.
Let $\epsilon,\delta>0$ be sufficiently small and $r=2\epsilon
\sqrt{\ln {\frac {3C_n}{\epsilon^{2n-1}}}}$. Let $S\subseteq SO(n+1)$ be a random 
subset such that $|S|=k$ satisfies the inequality in theorem \ref{main1}, namely 
\[k>8\ln 2\begin{pmatrix}(n+4)+2\ln\begin{pmatrix}{\frac 1{\delta}}\end{pmatrix}
+6n(1+a_n)\ln\begin{pmatrix}{\frac 1{\epsilon}}\end{pmatrix}-\ln (n!)\end{pmatrix},\] 
where $a_n:={\frac {2\log_2\log_2(5n)}
{\log_2(5n)}}$.
Let $x_0\in S^n$ and let $\nu$ be the uniform probability measure on $S^n$, 
supported on ${\hat S}^lx_0$, where ${\hat S} = S \cup S^{-1}$ as before and \[l={\frac n2}\log_2
\begin{pmatrix}{\frac 1{r\epsilon}}\end{pmatrix}+(4+3a_n)\log_2
\begin{pmatrix}{\frac 1{\epsilon}}\end{pmatrix}.\] Then, with 
probability at least $1-\delta$, the 
following inequality holds: \[W_1(\sigma,\nu)\leq \epsilon\]
\end{thm}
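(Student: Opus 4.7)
The plan is to use the Kantorovich-Rubin\v{s}te\'{i}n duality (Theorem~\ref{RubiK}) to reduce bounding $W_1(\sigma,\nu)$ to controlling $\int \phi\, d\sigma - \int \phi\, d\nu$ uniformly over $\phi \in \Lip_1(S^n)$. Since both measures are probability measures, we may subtract a constant from $\phi$ without changing this difference; normalize so that $\int \phi\, d\sigma = 0$, in which case $\|\phi\|_{L^2(\sigma)} \leq \pi$ (the diameter of $S^n$).

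The core idea is heat-kernel smoothing. Introduce the density $\rho(x) := \frac{1}{(2k)^l} \sum_{s \in \hat S^l} H_t(x_0 s, x)$, with $t = \epsilon^2$ (up to normalization by $n$ to match the target constants), and write
\[
\int_{S^n} \phi\, d\sigma - \frac{1}{(2k)^l} \sum_{s \in \hat S^l} \phi(x_0 s) \;=\; \underbrace{\int_{S^n} \phi (1_{S^n} - \rho)\, d\sigma}_{=: I} \;+\; \underbrace{\frac{1}{(2k)^l}\sum_{s \in \hat S^l}\Bigl(\int_{S^n} \phi(x) H_t(x_0 s, x)\, d\sigma(x) - \phi(x_0 s)\Bigr)}_{=: II}.
\]
Term $I$ compares $\sigma$ against the heat-smoothed empirical measure, and term $II$ measures the error introduced by that smoothing for a $1$-Lipschitz test function.

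For $I$, Cauchy-Schwarz gives $|I| \leq \|\phi\|_{L^2} \cdot \|1_{S^n} - \rho\|_{L^2} \leq \pi \cdot \|1_{S^n} - \rho\|_{L^2}$. By Theorem~\ref{eqdist} applied with $\eta := \epsilon^{3n(1+a_n)}$, and the chosen $k,l$, this $L^2$ norm is at most $2\eta$ with probability at least $1-\delta$; since $\eta \ll \epsilon$, this contribution is negligible. For $II$, use that $H_t(x_0s,\cdot)$ is a probability density, so each summand equals $\int (\phi(x) - \phi(x_0 s))\, H_t(x_0 s, x)\, d\sigma(x)$; the Lipschitz bound $|\phi(x)-\phi(x_0s)|\leq d(x,x_0s)$ together with the Corollary following Lemma~\ref{vari} gives
\[
\Bigl|\int_{S^n} (\phi(x) - \phi(x_0 s)) H_t(x_0 s, x)\, d\sigma(x)\Bigr| \;\leq\; \int_{S^n} d(x_0s, x)\, H_t(x_0s, x)\, d\sigma(x) \;\leq\; \sqrt{nt},
\]
uniformly in $s$, which therefore also bounds the average $|II|$. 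Choosing $t$ of order $\epsilon^2/n$ (or absorbing the $\sqrt{n}$ into the constants implicit in the theorem) then yields $|I|+|II| \leq \epsilon$ uniformly over $\phi \in \Lip_1(S^n)$.

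The main obstacle is the bound on term $II$: a generic $1$-Lipschitz function can be estimated only via $d(x,x_0s)$, not $d(x,x_0s)^2$, so the naive Cauchy-Schwarz bound $\sqrt{\int d^2 H_t\, d\sigma}$ from Lemma~\ref{vari} alone is too weak to be sharp in the Lipschitz norm. What saves us is the sharper first-moment bound in the Corollary, which rests on the Hecke-Funk formula, the Fourier-Laplace expansion of $H_t$ (justified by Lemma~\ref{sob}), and the variance estimate of Lemma~\ref{vari}. The remaining bookkeeping---tracking that $k$ and $l$ as chosen in Theorem~\ref{main1} suffice simultaneously for the high-probability bound on $\|1_{S^n} - \rho\|_{L^2}$ via Theorem~\ref{eqdist}---is routine and uses exactly the parameter choices already verified in the proof of Theorem~\ref{main1}.
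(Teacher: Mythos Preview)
Your proposal is correct and follows essentially the same route as the paper: both use Kantorovich--Rubin\v{s}te\'{i}n duality, the same heat-kernel smoothing decomposition into the two terms you call $I$ and $II$, Theorem~\ref{eqdist} (invoked in the paper via the proof of Theorem~\ref{main1}) for term $I$, and Lemma~\ref{vari} plus H\"older (equivalently, the Corollary) for term $II$. The only cosmetic difference is that the paper bounds $I$ via $\|\phi\|_{L^\infty}\cdot\|1_{S^n}-\rho\|_{L^1}$ after showing $\|\phi\|_{L^\infty}\le 2$ for mean-zero $1$-Lipschitz $\phi$, whereas you use the $L^2$--$L^2$ pairing; both work.
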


\begin{proof}
%It suffices to show that there is $\lambda\in\Pi(\sigma,\nu)$ such that \[\int_{S^n \times S^n}d(x,y)~d\lambda<\epsilon\] Define Borel measure $\lambda$ on $S^n\times S^n$ by \[\lambda(A_1\times A_2)=\int_{A_1\times A_2}\sigma(x)\nu(y)~d\sigma(x,y)\] One has $\lambda \in \Pi(\mu,\nu)$ since \begin{align*}\lambda(A_1\times S^n)&={\frac 1{\Omega_n}}\int_{A_1\times S^n}\nu(y)~d\sigma(x,y)\\&=\mu(A_1)\\\lambda(S^n\times A_2)&={\frac 1{\Omega_n}}\int_{S^n\times A_2}\nu(y)~d\sigma(x,y)\\&=\nu(A_2)\end{align*} Now, \begin{align*}\int_{S^n\times S^n}d(x,y)~d\lambda&=\int_{S^n}\begin{pmatrix}\int_{S^n}d(x,y)~d\sigma(x)\end{pmatrix}d\nu(y)\\&=\nu(A_2)\end{align*} For any $x\in S^n$, one has \begin{align*}\int_{S^n}d(x,y)~d\sigma(x)&=\int_{S^{n-1}}d(x,y)~d\sigma(x)\end{align*}

Let $\Lip_{1,0}(S^n)$ be the set of mean-zero $\Lip_1$-functions on $S^n$. 
By theorem \ref{RubiK}, it suffices to show that 
\begin{equation}\sup_{\phi\in \Lip_{1,0}(S^n)}\begin{pmatrix}
\int_Y\phi~d\sigma-\int_Y\phi~d\nu\end{pmatrix}<\epsilon\end{equation} 
For any such function $\phi\in \Lip_{1,0}(S^n)$, if $\phi(x_0)=||\phi||_{L^\infty}$ then 
\begin{align*}0&=\int_{S^n}\phi(x)~d\mu(x) \\ &=
\int_{S^n}\phi(x_0)~d\mu(x) + \int_{S^n}\begin{pmatrix}\phi(x)-\phi(x_0)\end{pmatrix}
~d\mu(x) \\ &=\phi(x_0)+\int_{S^n}\begin{pmatrix}\phi(x)-\phi(x_0)\end{pmatrix}
~d\mu(x)\\ \Rightarrow\hspace{0.5cm}\phi(x_0)&\leq \int_{S^n}|\phi(x)-\phi(x_0)|
~d\mu(x)\\ &\leq \int_{S^n}d(x,x_0)~d\mu(x)\\
%\hspace{3.25cm}\mbox{recall:}~d(x,x_0)=\arccos\langle x,x_0\rangle\\
&\leq 2\end{align*}
For sufficiently small $t>0$, we let $\nu_t^\ast$ be the Borel probability measure on $S^n$ whose density 
is \[{\frac {d\nu^\ast_t(x)}{d\sigma}}={\frac 1{|{\hat S}^l|}}\sum_{y
\in {\hat S}^lx_0}H_t(y,x)\] Then, for $t={\epsilon^2}$, one has \begin{align}W_1(\sigma,\nu^\ast_t)& =  \sup_{\phi\in\Lip_{1,0}(S^n) }
\begin{vmatrix}\int_{S^n}\phi(x)~d\sigma(x)-
\int_{S^n}\phi(x)~d\nu^\ast_t(x)\end{vmatrix}\nonumber\\ &\leq \sup_{\phi\in\Lip_{1,0}(S^n)}\int_{S^n}|\phi(x)|\cdot 
\begin{vmatrix}1_{S^n}-{\frac 1{(2k)^l}}\sum_{y
\in {\hat S}^lx_0}H_t(y,x)\end{vmatrix}d\sigma(x)\nonumber\\
\hspace{1cm}&\leq \sup_{\phi\in\Lip_{1,0}(S^n) }
\begin{Vmatrix}\phi\end{Vmatrix}_{L^\infty}\cdot
\int_{S^n}\begin{vmatrix}1_{S^n}-{\frac 1{(2k)^l}}\sum_{y
\in {\hat S}^lx_0}H_t(y,x)\end{vmatrix}d\sigma(x)\nonumber\\
\label{1stw}&\leq 2\epsilon^{3n}
\hspace{3cm}(\mbox{see Theorem}~\ref{main1})\end{align} with probability at least $1-\delta$.\\

For any function $\phi\in\Lip_{1,0}(S^n)$, define ${\tilde {\phi}}_t:S^n\rightarrow\bR$ to be 
\[{\tilde {\phi}}_t(x)={\frac 1{|{\hat S}^l|}}\sum_{y
\in {\hat S}^lx_0}\phi(y)H_t(y,x)\] From uniform convergence 
of the Fourier-Laplace expansion of heat-kernel, it 
follows that $\int_{S^n}H_t(y,x)~d\sigma(x)=1$; hence, putting $t=\epsilon^2$, one has 
\begin{align}\label{secondl}\int_{S^n}{\tilde{\phi}}_{\epsilon^2}(x)~d\sigma(x) & = 
{\frac 1{|{\hat S}^l|}}\sum_{y\in {\hat S}^lx_0}\phi(y)\int_{S^n}H_t(y,x)~d\sigma(x)
%\begin{pmatrix}{\frac 1{|{\hat S}^l|}}\sum_{y\in {\hat S}^lx_0}H_{\epsilon^2}(x,y)\end{pmatrix}
\nonumber\\ &=\int_{S^n}\phi(x)~d\nu(x).\end{align} Moreover, \begin{align}\label{last}
&~\begin{vmatrix}\int_{S^n}\phi(x)~d\nu^\ast_t(x)-\int_{S^n}
{\tilde{\phi}}_t(x)~d\sigma(x)\end{vmatrix}\nonumber\\ =&~{\frac 1{(2k)^l}}
\begin{vmatrix}\sum_{y\in {\hat S}^lx_0}\int_{S^n}\begin{pmatrix}\phi(x)-
\phi(y)\end{pmatrix}H_t(y,x)~d\sigma(x)\end{vmatrix}\nonumber\\ 
\leq&~ {\frac 1{(2k)^l}}\sum_{y\in {\hat S}^lx_0}\int_{S^n}\begin{vmatrix}\phi(x)-
\phi(y)\end{vmatrix}H_t(y,x)~d\sigma(x)\nonumber\\ \leq&~ 
{\frac 1{(2k)^l}}\sum_{y\in {\hat S}^lx_0}\int_{S^n}d(y,x)~H_t(y,x)~
d\sigma(x)\nonumber\\ \leq~& {\frac {n\sqrt t}{(2k)^l}}\end{align} 
by lemma \ref{vari} and H\"{o}lder inequality applied to $d(y,x)=d(y,x)\cdot 1_{S^n}$. 
%Here $d(x,y)=\arccos\langle x,y\rangle$ is the Riemannian distance on $S^n$. 
Therefore, for $t=\epsilon^2>0$ sufficiently small, equations (\ref{1stw}), (\ref{secondl}), 
and (\ref{last}) yield \begin{align*}W_1(\mu,\nu)&\leq 
W_1(\mu,\nu^\ast_t)+W_1(\nu^\ast_t,\nu)\\ &\leq \epsilon\end{align*}
\end{proof}

\vspace{0.5cm}

\section{Conclusion}
We proved two results about the finite time behavior of a random Markov Chain 
on the sphere $S^n$ whose transitions correspond to rotations chosen uniformly 
at random. The first result states that for $k = O(n\ln\frac{1}{\epsilon} + 
\frac{1}{\delta})$ random rotations and $\ell = {O}(n \ln 1/\epsilon)$, if one 
takes the image of the north pole on the sphere under all possible words of 
length $\ell$ in the $k$ alphabets and their inverses, one obtains an $\epsilon-$net 
with high probability. For these parameters, the value of $(2k)^\ell$ is close to 
the volumetric lower bound of $(1/\epsilon)^{\Omega(n)}$ on the size of an 
$\epsilon-$net of $S^n$. Secondly, we show that this $\epsilon-$net is 
equidistributed with probability at least $1 - \delta$  in the sense that the 
$1-$Wasserstein distance of the uniform measure on the net is within 
$\epsilon$ of the uniform measure on $S^n$.\\

These results can respectively be applied to approximately minimize a $1-$
Lipschitz function on the sphere (by evaluation on the $\epsilon-$net) and in 
to approximately integrate a $1-$Lipschitz function on the sphere. In both cases 
the approximation is within an additive $\epsilon$ of the true value.

%%%%%%%%%%%%%%%%%%%%%%%%%%%%%%%%%%%%%%%%%%%%%%%%%%%%%%%%%%%%%%%%%%%%%%%%
% Bibliography
%%%%%%%%%%%%%%%%%%%%%%%%%%%%%%%%%%%%%%%%%%%%%%%%%%%%%%%%%%%%%%%%%%%%%%%%
\newpage
\bibliographystyle{amsplain}
%\bibliography{topology}
\def\noopsort#1{}\def\MR#1{}
\providecommand{\bysame}{\leavevmode\hbox to3em{\hrulefill}\thinspace}
\providecommand{\MR}{\relax\ifhmode\unskip\space\fi MR }
% \MRhref is called by the amsart/book/proc definition of \MR.
\providecommand{\MRhref}[2]{%
  \href{http://www.ams.org/mathscinet-getitem?mr=#1}{#2}
}
\providecommand{\href}[2]{#2}

\end{document}